\newtheorem{obs} [subsection]{Remark}
\newtheorem{prop}[subsection]{Proposition}
\newtheorem{teor}[subsection]{Theorem}
\newtheorem{lema}[subsection]{Lemma}
\newtheorem{cor} [subsection]{Corollary}
\newcommand{\Zng}{$\mathbb Z^n$-graded $S$-module}
\def\sdepth{\operatorname{sdepth}}
\def\depth{\operatorname{depth}}
\def\supp{\operatorname{supp}}
\def\deg{\operatorname{deg}}
\def\pd{\operatorname{pd}}
\begin{document}
\selectlanguage{english}
\frenchspacing

\numberwithin{equation}{section}

\title{Depth and Stanley depth of powers of the path ideal of a path graph}
\author{Silviu B\u al\u anescu$^1$ and Mircea Cimpoea\c s$^2$}
\date{}

\maketitle

\footnotetext[1]{ \emph{Silviu B\u al\u anescu}, University Politehnica of Bucharest, Faculty of
Applied Sciences, %Department of Mathematical Methods and Models, 
Bucharest, 060042, E-mail: silviu.balanescu@stud.fsa.upb.ro}
\footnotetext[2]{ \emph{Mircea Cimpoea\c s}, University Politehnica of Bucharest, Faculty of
Applied Sciences, %Department of Mathematical Methods and Models, 
Bucharest, 060042, Romania and Simion Stoilow Institute of Mathematics, Research unit 5, P.O.Box 1-764,
Bucharest 014700, Romania, E-mail: mircea.cimpoeas@upb.ro,\;mircea.cimpoeas@imar.ro}

\begin{abstract}
Let $I_{n,m}:=(x_1x_2\cdots x_m,\;  x_2x_3\cdots x_{m+1},\; \ldots,\; x_{n-m+1}\cdots x_n)$ be the
$m$-path ideal of the path graph of length $n$, in the ring $S=K[x_1,\ldots,x_n]$.
We prove that: $$\depth(S/I_{n,m}^t)=\begin{cases} n -t+2 -
 \left\lfloor \frac{n-t+2}{m+1} \right\rfloor - \left\lceil \frac{n-t+2}{m+1} \right\rceil, & t \leq n+1-m
 \\ m-1,& t > n+1-m \end{cases},\text{ for all }t\geq 1.$$ 
Also, we prove that $\depth(S/I_{n,m}) \geq \sdepth(S/I_{n,m}^t) \geq \depth(S/I_{n,m}^t)$
and \linebreak $\sdepth(I_{n,m}^t)\geq \depth(I_{n,m}^t)$, for all $t\geq 1$.

\noindent \textbf{Keywords:} Stanley depth, depth, monomial ideal.

\noindent \textbf{2020 Mathematics Subject Classification:} 13C15, 13P10, 13F20.
\end{abstract}

\section*{Introduction}
\linenumbers

Let $K$ be a field and $S=K[x_1,\ldots,x_n]$ the polynomial ring over $K$.
% We denote $\me=(x_1,\ldots,x_n)$ the maximal graded ideal of $S$.
Let $M$ be a \Zng. A \emph{Stanley decomposition} of $M$ is a direct sum $\mathcal D: M = \bigoplus_{i=1}^r m_i K[Z_i]$ as a $\mathbb Z^n$-graded $K$-vector space, where $m_i\in M$ is homogeneous with respect to $\mathbb Z^n$-grading, $Z_i\subset\{x_1,\ldots,x_n\}$ such that $m_i K[Z_i] = \{um_i:\; u\in K[Z_i] \}\subset M$ is a free $K[Z_i]$-submodule of $M$. We define $\sdepth(\mathcal D)=\min_{i=1,\ldots,r} |Z_i|$ and $\sdepth(M)=\max\{\sdepth(\mathcal D)|\;\mathcal D$ is a Stanley decomposition of $M\}$. The number $\sdepth(M)$ is called the \emph{Stanley depth} of $M$. 

Herzog, Vladoiu and Zheng show in \cite{hvz} that $\sdepth(M)$ can be computed in a finite number of steps if $M=I/J$, where $J\subset I\subset S$ are monomial ideals. In \cite{rin}, Rinaldo give a computer implementation for this algorithm, in the computer algebra system $\mathtt{CoCoA}$ \cite{cocoa}. We say that a \Zng $\;M$ satisfies the Stanley inequality, if 
$$\sdepth(M)\geq \depth(M).$$
In \cite{apel}, J.\ Apel restated a conjecture firstly given by Stanley in \cite{stan}, namely that any \Zng $\;M$ satisfies the Stanley
inequality. This conjecture proves to be false, in general, for $M=S/I$ and $M=J/I$, where $0\neq I\subset J\subset S$ are monomial ideals, see \cite{duval}.

Stanley depth is an important combinatorial invariant and we believe that it deserves a thorough study. The explicit computation of the Stanley depth it is a difficult task, even in very simple cases, like the maximal monomial ideal $\mathbf m=(x_1,\ldots,x_n)$ of $S$.
Therefore, although the Stanley conjecture was disproved in the most general set up, it is interesting to find large classes of ideals which satisfy the Stanley inequality. 
Also, we note that, in the case of monomial ideals, Stanley's conjecture, i.e. $$\sdepth(I)\geq \depth(I),$$
remains open. For a friendly introduction in the thematic of Stanley depth, we refer the reader \cite{her}.

For $n\geq m\geq 1$, the \emph{$m$-path ideal of the path graph} of length $n$ is:
$$I_{n,m}=(x_1x_2\cdots x_m,\;  x_2x_3\cdots x_{m+1},\; \ldots,\; x_{n-m+1}\cdots x_n)\subset S.$$
In \cite[Theorem 1.3]{path} we proved that: 
$$\sdepth(S/I_{n,m})=\depth(S/I_{n,m})=n+1 - \left\lfloor \frac{n+1}{m+1} \right\rfloor - \left\lceil \frac{n+1}{m+1} \right\rceil.$$
In the preprint \cite{stef}, Alin \c Stefan stated that:
$$\sdepth(S/I_{n,2}^t) = \max\left\{\ \left\lceil \frac{n+t-1}{3} \right\rceil , 1\right\},$$
where $t\geq 1$. %but the proof of the inequality $"\leq"$ is incomplete.

In Theorem \ref{t2}, we generalized both results above, proving that:
$$\sdepth(S/I_{n,m}^t) \geq \depth(S/I_{n,m}^t) = \begin{cases} n -t+2 -
 \left\lfloor \frac{n-t+2}{m+1} \right\rfloor - \left\lceil \frac{n-t+2}{m+1} \right\rceil, & t \leq n+1-m
 \\ m-1,& t > n+1-m \end{cases}.$$
As a consequence, in Corollary \ref{coro}, we give a formula for the projective dimension of $S/I_{n,m}^t$.

Also, in Theorem \ref{t3}, we prove that:
\begin{align*}
& \sdepth(I_{n,m}^t)\geq \depth(I_{n,m}^t)=\begin{cases} n -t+3 -
 \left\lfloor \frac{n-t+2}{m+1} \right\rfloor - \left\lceil \frac{n-t+2}{m+1} \right\rceil, & t \leq n+1-m
 \\ m,& t > n+1-m \end{cases} \\
& \text{ and } \sdepth(I_{n,m}^t)\leq \min\{ n+1 - \left\lfloor \frac{n-t+1}{m+1} \right \rfloor,\; n - \left\lfloor \frac{\lceil \frac{t}{m} \rceil + 1}{2} \right\rfloor \}.
\end{align*}
Hence, $I_{n,m}^t$ satisfies Stanley's inequality, for any $t\geq 1$. 

\newpage
\section{Preliminaries}

First, we recall the well known Depth Lemma, see for instance \cite[Lemma 2.3.9]{real}. % or \cite[Lemma 3.1.4]{vasc}.

\begin{lema}\label{l11}(Depth Lemma)
If $0 \rightarrow U \rightarrow M \rightarrow N \rightarrow 0$ is a short exact sequence of modules over a local ring $S$, or a Noetherian graded ring with $S_0$ local, then:
\begin{enumerate}
\item[(1)] $\depth M \geq \min\{\depth N,\depth U\}$.
\item[(2)] $\depth U \geq \min\{\depth M,\depth N +1\}$.
\item[(3)] $\depth N \geq \min\{\depth U-1,\depth M\}$.
\end{enumerate}
\end{lema}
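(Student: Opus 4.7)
The plan is to derive all three inequalities from the long exact sequence of Ext modules obtained by applying $\operatorname{Hom}_S(K,-)$ to the given short exact sequence, where $K=S/\mathfrak{m}$ denotes the residue field (in the graded setting, $\mathfrak{m}$ is the unique maximal graded ideal). The essential tool is the Ext-characterization of depth for a finitely generated module $M$,
$$\depth M \;=\; \min\bigl\{\, i \;:\; \operatorname{Ext}^i_S(K,M) \neq 0 \,\bigr\},$$
a standard consequence of the Auslander--Buchsbaum--Rees theory that is valid both for Noetherian local rings and for Noetherian graded rings with $S_0$ local. With this characterization in hand, all three parts become pure bookkeeping on a single long exact sequence.

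Applying $\operatorname{Hom}_S(K,-)$ to $0 \to U \to M \to N \to 0$ produces
$$\cdots \to \operatorname{Ext}^{i-1}_S(K,N) \to \operatorname{Ext}^i_S(K,U) \to \operatorname{Ext}^i_S(K,M) \to \operatorname{Ext}^i_S(K,N) \to \operatorname{Ext}^{i+1}_S(K,U) \to \cdots$$
For (1), if $i < \min\{\depth U,\depth N\}$ then both neighbors of $\operatorname{Ext}^i_S(K,M)$ vanish, forcing $\operatorname{Ext}^i_S(K,M)=0$, whence $\depth M\geq \min\{\depth U,\depth N\}$. For (2), if $i < \min\{\depth M,\depth N+1\}$ then $\operatorname{Ext}^{i-1}_S(K,N)=0$ and $\operatorname{Ext}^i_S(K,M)=0$, and exactness of the three-term window centered at $\operatorname{Ext}^i_S(K,U)$ yields $\operatorname{Ext}^i_S(K,U)=0$. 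For (3), if $i < \min\{\depth U-1,\depth M\}$ then $\operatorname{Ext}^i_S(K,M)=0$ and $\operatorname{Ext}^{i+1}_S(K,U)=0$, so exactness now gives $\operatorname{Ext}^i_S(K,N)=0$.

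The only genuine obstacle is justifying the Ext-characterization of depth in the stated generality; once that is taken for granted, the proof reduces to a one-line diagram chase repeated three times. Since each inequality is extracted from a different window of the same long exact sequence, I would present the three parts in parallel rather than sequentially, and I would remark at the end that the conclusion extends verbatim to the $\mathbb{Z}^n$-graded polynomial-ring setting used throughout the rest of the paper, because the Ext-characterization continues to hold there. Given that the authors merely cite \cite[Lemma~2.3.9]{real}, this sketch is really only a reminder of why the classical argument works.
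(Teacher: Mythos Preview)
Your argument is correct and is precisely the standard proof of the Depth Lemma via the long exact Ext sequence and the Rees characterization of depth. Note, however, that the paper does not actually prove this lemma: it merely records it as a well-known fact and cites \cite[Lemma~2.3.9]{real}, so there is no ``paper's own proof'' to compare against. Your write-up simply supplies the classical justification that the authors chose to omit.
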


In \cite{asia}, Asia Rauf proved the analog of Lemma $1.1(1)$ for $\sdepth$:

\begin{lema}\label{asia}
If $0 \rightarrow U \rightarrow M \rightarrow N \rightarrow 0$ be a short exact sequence of $\mathbb Z^n$-graded $S$-modules, then:
$$ \sdepth(M) \geq \min\{\sdepth(U),\sdepth(N) \}.$$
\end{lema}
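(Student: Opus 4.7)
The plan is to build a Stanley decomposition of $M$ by gluing together given Stanley decompositions of $U$ and $N$, and to check that its Stanley depth is exactly $\min\{\sdepth(U),\sdepth(N)\}$. Fix Stanley decompositions $\mathcal D_U: U=\bigoplus_{i=1}^{r} u_i K[Z_i]$ and $\mathcal D_N: N=\bigoplus_{j=1}^{s} v_j K[W_j]$ realizing $\sdepth(U)$ and $\sdepth(N)$, respectively. Viewing $U$ as a submodule of $M$ via the injection, denote by $\pi:M\to N$ the surjection; since $\pi$ is $\mathbb Z^n$-graded and surjective, each homogeneous $v_j\in N$ has a $\mathbb Z^n$-homogeneous lift $\tilde v_j\in M$ of the same degree, which we fix once and for all.

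Next I would verify three properties of the candidate decomposition $M = \bigoplus_{i} u_i K[Z_i]\;\oplus\;\bigoplus_{j} \tilde v_j K[W_j]$. First, each $\tilde v_j K[W_j]$ is a free $K[W_j]$-submodule of $M$: for any nonzero $w\in K[W_j]$ one has $\pi(w\tilde v_j)=w v_j\neq 0$ by freeness of $v_jK[W_j]\subset N$, hence $w\tilde v_j\neq 0$. Second, the sum $\sum_j \tilde v_j K[W_j]$ is direct and its intersection with $U$ is zero: if $\sum_j w_j\tilde v_j\in U=\ker\pi$, then $\sum_j w_j v_j=0$ in $N$, and directness of $\mathcal D_N$ forces each $w_j=0$. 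Third, the sum equals $M$ as a $\mathbb Z^n$-graded $K$-vector space: given homogeneous $m\in M$, write $\pi(m)=\sum_j w_j v_j$ uniquely via $\mathcal D_N$; then $m-\sum_j w_j\tilde v_j\in \ker\pi=U$, which can in turn be expanded via $\mathcal D_U$.

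Combining the three properties shows that the proposed decomposition is indeed a Stanley decomposition of $M$, and its Stanley depth equals
\[
\min\Bigl\{\min_i |Z_i|,\ \min_j |W_j|\Bigr\}=\min\{\sdepth(U),\sdepth(N)\},
\]
yielding $\sdepth(M)\geq \min\{\sdepth(U),\sdepth(N)\}$ by definition of Stanley depth. The only subtle point is the homogeneity and well-definedness of the lifts $\tilde v_j$ together with the directness of the resulting sum; once those are checked, the rest is bookkeeping. The main obstacle, if any, is making sure everything is carried out in the $\mathbb Z^n$-graded category (lifts of homogeneous elements remain homogeneous, and the decompositions are compatible with grading), which is the standard justification for why this gluing argument works in the monomial setting but not verbatim in more general categories.
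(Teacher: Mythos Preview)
The paper does not actually prove this lemma; it is quoted from Rauf~\cite{asia} and stated in the Preliminaries without argument. Your proof is correct and is precisely the standard argument (and essentially the one in~\cite{asia}): lift the homogeneous generators of a Stanley decomposition of $N$ along the graded surjection $\pi$, combine with a Stanley decomposition of $U=\ker\pi$, and check freeness, directness, and spanning. The only delicate points are exactly the ones you flag---that the lifts $\tilde v_j$ can be chosen $\mathbb Z^n$-homogeneous of the same degree (so each $\tilde v_j K[W_j]$ is a graded subspace) and that directness in $N$ pulls back via $\pi$ to directness among the lifted pieces and independence from $U$---and you verify both.
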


We also recall the following well known results. See for instance \cite[Corollary 1.3]{asia}, \cite[Proposition 2.7]{mirci},
\cite[Theorem 1.1]{mir}, \cite[Lemma 3.6]{hvz} and \cite[Corollary 3.3]{asia}.

\begin{lema}\label{lem}
Let $I\subset S$ be a monomial ideal and let $u\in S$ a monomial which is not in $I$. Then:
\begin{enumerate}
\item[(1)] $\sdepth(S/(I:u))\geq \sdepth(S/I)$.
\item[(2)] $\sdepth(I:u)\geq \sdepth(I)$.
\item[(3)] $\depth(S/(I:u))\geq \depth(S/I)$.
\item[(4)] If $I=u(I:u)$, then:
  \begin{enumerate}
  \item $\sdepth(S/(I:u))=\sdepth(S/I)$.
	\item $\depth(S/(I:u))=\depth(S/I)$.
	\item $\sdepth(I:u)=\sdepth(I)$.
  \end{enumerate}
\item[(5)] If $u$ is regular on $S/I$, then:
  \begin{enumerate}
	\item $\sdepth(S/(I,u))=\sdepth(S/I)-1$.
  \item $\depth(S/(I,u))=\depth(S/I)-1$.
  \end{enumerate}
\item[(6)] If $S'=S[x_{n+1}]$, then:
\begin{enumerate}
\item $\sdepth_{S'}(S'/IS')=\sdepth_S(S/I)+1$, $\sdepth_{S'}(IS')=\sdepth_S(I)+1$.
\item $\depth_{S'}(S'/IS')=\depth_S(S/I)+1$.
\end{enumerate}
\end{enumerate}
\end{lema}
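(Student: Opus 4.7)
The plan is to treat the six items in order; (1)--(4) all hinge on the colon-ideal short exact sequence, while (5) and (6) are nearly immediate from the definitions.

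For (1) and (2), I would argue via Stanley decompositions. Starting from a decomposition $\mathcal D\colon S/I = \bigoplus_i \bar m_i\,K[Z_i]$ (resp.\ of $I$), the summands whose representative $\bar m_i$ is divisible by $u$ assemble, after dividing $u$ out, into a Stanley decomposition of the image of the injection $S/(I:u)\xrightarrow{\,\cdot u\,} S/I$ (resp.\ of the corresponding image inside $I$). The sets $Z_i$ are unchanged, so the resulting sdepth is at least $\sdepth(\mathcal D)$; taking the best $\mathcal D$ gives the inequalities.

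For (3), the cleanest route uses the monomial-ideal fact $\operatorname{Ass}(S/(I:u))\subseteq\operatorname{Ass}(S/I)$, so any $S/I$-regular sequence of maximal length remains regular on $S/(I:u)$; alternatively, Lemma \ref{l11} applied to
$$0 \to S/(I:u)\xrightarrow{\,\cdot u\,} S/I \to S/(I,u) \to 0$$
and an induction on $n$ yields the same conclusion. For (4), the hypothesis $I=u(I:u)$ forces every minimal generator of $I$ to be divisible by $u$, hence $(I,u)=(u)$ and the exact sequence becomes
$$0 \to S/(I:u) \xrightarrow{\,\cdot u\,} S/I \to S/(u) \to 0.$$
Since $\depth(S/(u))=n-1$ and $(I:u)\neq 0$ (so $\depth(S/(I:u))\leq n-1$), Lemma \ref{l11}(1) gives $\depth(S/I)\geq\depth(S/(I:u))$, upgrading (3) to the equality (4b); Lemma \ref{asia} plays the same role for (4a) once the corresponding value of $\sdepth(S/(u))$ is pinned down, and the ideal-level analogue settles (4c).

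Part (5) follows from $0\to S/I\xrightarrow{\,\cdot u\,} S/I \to S/(I,u)\to 0$ together with the fact that both $\depth$ and $\sdepth$ drop by exactly one upon quotienting by a regular element of the graded maximal ideal. For (6), the isomorphisms $S'/IS' \cong (S/I)\otimes_K K[x_{n+1}]$ and $IS'\cong I\otimes_K K[x_{n+1}]$ let us adjoin $x_{n+1}$ to every $Z_i$ in an optimal Stanley decomposition, and to an optimal regular sequence, bumping both invariants by one. The main technical friction lies in (4), where the reverse inequalities must be extracted cleanly from the short exact sequence; elsewhere the arguments amount to careful application of Lemmas \ref{l11} and \ref{asia}.
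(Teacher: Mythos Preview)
The paper does not actually prove this lemma: it is stated as a collection of ``well known results'' with pointers to \cite{asia}, \cite{mirci}, \cite{mir}, \cite{hvz}. So there is no in-paper argument to compare against; what matters is whether your sketch is correct on its own.

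Most of it is fine, but your argument for (1) and (2) has a genuine gap. You propose to keep exactly those Stanley spaces $m_iK[Z_i]$ with $u\mid m_i$ and divide out $u$. This does \emph{not} in general produce a Stanley decomposition of the image of $S/(I:u)\hookrightarrow S/I$. Take $S=K[x,y]$, $I=(y^2)$, $u=x$: a Stanley decomposition of $S/I$ is $1\cdot K[x]\oplus y\cdot K[x]$, and neither generator is divisible by $x$, so your recipe yields the empty family, whereas $S/(I{:}x)=S/(y^2)\neq 0$. The point is that a monomial $w\notin I$ with $u\mid w$ can sit in a summand $m_iK[Z_i]$ with $u\nmid m_i$, provided the ``missing'' part of $u$ lies in $Z_i$. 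The correct construction (as in \cite{asia}, \cite{mir}) replaces each $m_iK[Z_i]$ by $\operatorname{lcm}(m_i,u)K[Z_i]$ whenever this is nonempty (equivalently, whenever the part of $u$ supported outside $Z_i$ already divides $m_i$); these pieces are pairwise disjoint, cover exactly the multiples of $u$ in $S/I$, and have the same $Z_i$, so dividing by $u$ gives the desired decomposition of $S/(I:u)$ with unchanged sdepth. The same fix handles the ideal case (2).

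Your treatment of (3) via $\operatorname{Ass}(S/(I:u))\subseteq\operatorname{Ass}(S/I)$ is correct, and (4)--(6) are handled appropriately once (1)--(3) are in place. For (4c) you can in fact argue more directly: the hypothesis $I=u(I:u)$ says multiplication by $u$ is a multigraded isomorphism $(I:u)\to I$, so $\sdepth$ (and $\depth$) are preserved without appealing to any exact sequence.
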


\begin{teor}(\cite[Theorem 1.3]{path})\label{mirpat}

If $I_{n,m}=(x_1\cdots x_m,\; x_2\cdots x_{m+1},\; \ldots,\; x_{n-m+1}\cdots x_n) \subset S$, then:
$$\sdepth(S/I_{n,m})=\depth(S/I_{n,m})= n+1 - \left\lfloor \frac{n+1}{m+1} \right\rfloor - \left\lceil \frac{n+1}{m+1} \right\rceil.$$
\end{teor}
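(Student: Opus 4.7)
The plan is to argue by induction on $n \geq m$. The base case $n = m$ is immediate: $I_{m,m} = (x_1 \cdots x_m)$ is principal, so $S/I_{m,m}$ is a hypersurface of dimension $m-1$, whence $\depth(S/I_{m,m}) = \sdepth(S/I_{m,m}) = m-1$, matching the formula $m+1-1-1 = m-1$.

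For the inductive step $n > m$, I would apply the short exact sequence
$$0 \to S/(I_{n,m}:x_n) \xrightarrow{\cdot x_n} S/I_{n,m} \to S/(I_{n,m}, x_n) \to 0.$$
Since $x_n$ appears only in the last generator $x_{n-m+1}\cdots x_n$, we have $(I_{n,m}, x_n) = (x_n) + I_{n-1,m}$, so $S/(I_{n,m}, x_n) \cong K[x_1,\ldots,x_{n-1}]/I_{n-1,m}$ with $x_n$ regular on the quotient, and Lemma~\ref{lem}(5) together with the inductive hypothesis yields its depth. The colon ideal simplifies, after removing the redundant generator $x_{n-m}\cdots x_{n-1}$, to
$$(I_{n,m}:x_n) = (x_1\cdots x_m,\; \ldots,\; x_{n-m-1}\cdots x_{n-2},\; x_{n-m+1}\cdots x_{n-1});$$
this is not itself a path ideal, so to compute its depth I would iterate the technique—peeling off $x_{n-1}, x_{n-2}, \ldots$ one at a time through further short exact sequences, using Lemma~\ref{lem}(4),(6)—and reduce to an instance of $S/I_{n-m-1,m}$ equipped with extra free variables. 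The Depth Lemma~\ref{l11} then combines the intermediate depths; the formula $f(n) := n+1 - \lfloor(n+1)/(m+1)\rfloor - \lceil(n+1)/(m+1)\rceil$ satisfies the recursion $f(n) = f(n-m-1) + (m-1)$ by direct arithmetic, which closes up the induction to give $\depth(S/I_{n,m}) = f(n)$.

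For Stanley depth, running the parallel induction with Lemma~\ref{asia} in place of part~(1) of the Depth Lemma, together with parts~(1),(4),(6) of Lemma~\ref{lem}, yields the lower bound $\sdepth(S/I_{n,m}) \geq \depth(S/I_{n,m})$. The matching upper bound comes either from reading off the minimum $|Z_i|$ of an explicit Stanley decomposition extracted from the inductive construction, or from invoking $\sdepth(S/I) \leq \min_{\mathfrak{p}\in \mathrm{Ass}(S/I)}\dim(S/\mathfrak{p})$ for a well-chosen associated prime (possibly embedded, since $S/I_{n,m}$ is in general not Cohen--Macaulay).

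The principal obstacle is the analysis of $(I_{n,m}:x_n)$: because the new short generator $x_{n-m+1}\cdots x_{n-1}$ has degree $m-1$ and its variables overlap (for $m \geq 3$) with those of the remaining path generators, this colon is not a path ideal of the same shape and the recursion does not close in a single step. One must therefore either strengthen the inductive hypothesis to cover a broader class of ``path-plus-short-tail'' ideals, or use a heavier multiplier such as $x_{n-m+2}\cdots x_n$ so that the resulting colon decomposes cleanly as $I_{n-m-1,m}$ tensored with a polynomial extension by $m$ additional free variables, thereby bypassing the overlap.
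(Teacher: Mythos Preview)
This theorem is not proved in the present paper: it is quoted from \cite{path} as a preliminary result (see the citation in the theorem header) and is used only as the base case $t=1$ in the proofs of Proposition~\ref{t1} and Theorem~\ref{t2}. There is therefore no proof here to compare your proposal against.

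That said, your sketch is headed in the direction the paper itself takes for the more general powers case. Your closing suggestion---to colon by the full block $x_{n-m+1}\cdots x_n$ rather than by $x_n$ alone, so that the colon lands back in the path-ideal family---is exactly the mechanism of Lemma~\ref{inmt}, and the proof of Proposition~\ref{t1} proceeds by peeling off $x_{n-m+1},\ldots,x_n$ one at a time through the chain of short exact sequences~\eqref{exacte}, analysing the side quotients $U_j$ separately. So the ``heavier multiplier'' route you describe is the one that actually closes the induction, whereas the single-variable colon $(I_{n,m}:x_n)$ does, as you observe, fall outside the class and would force you to enlarge the inductive hypothesis.

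One small correction: $I_{n,m}$ is a squarefree monomial ideal, hence $S/I_{n,m}$ is reduced and has no embedded primes. Your proposed upper bound $\sdepth(S/I)\le\min_{\mathfrak p\in\mathrm{Ass}(S/I)}\dim(S/\mathfrak p)$ is valid, but the relevant prime must be a minimal one of large height (equivalently, a minimal vertex cover of the path hypergraph of maximum size), not an embedded one.
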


\section{Main results}

Let $1\leq m\leq n$ be two integers. As in the previous section, we consider the ideal: 
$$I_{n,m}=(x_1\cdots x_m,\; x_2\cdots x_{m+1},\; \ldots,\; x_{n-m+1}\cdots x_n)\subset S.$$

First, we prove the following lemma:

\begin{lema}\label{inmt}
Let $1\leq m\leq n$ and $t\geq 2$ be some integers. Then:
 $$(I_{n,m}^t:x_{n-m+1}\cdots x_n)=I_{n,m}^{t-1}.$$
\end{lema}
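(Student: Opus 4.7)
The plan is to establish both inclusions of the colon equality. Writing $u_i := x_i x_{i+1}\cdots x_{i+m-1}$ for the $i$-th generator of $I_{n,m}$ ($i = 1,\dots,n-m+1$) and setting $u := u_{n-m+1} = x_{n-m+1}\cdots x_n$, the inclusion $I_{n,m}^{t-1} \subseteq (I_{n,m}^t : u)$ is immediate since $u \in I_{n,m}$ gives $I_{n,m}^{t-1} \cdot u \subseteq I_{n,m}^t$.

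For the reverse inclusion, I would use the standard description of the colon of a monomial ideal by a monomial, reducing the problem to showing $M/\gcd(M,u) \in I_{n,m}^{t-1}$ for every monomial $M = u_{i_1}u_{i_2}\cdots u_{i_t}$ generating $I_{n,m}^t$, with indices ordered $1 \le i_1 \le \cdots \le i_t \le n-m+1$. The key structural observation is that the variable $x_n$ appears in only one generator of $I_{n,m}$, namely $u_{n-m+1} = u$.

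If $i_t = n-m+1$, then $u \mid M$, so $\gcd(M,u) = u$ and $M/\gcd(M,u) = u_{i_1}\cdots u_{i_{t-1}} \in I_{n,m}^{t-1}$. Otherwise $i_t \le n-m$, and in that case I would show that $N := u_{i_1}\cdots u_{i_{t-1}} = M/u_{i_t}$, already a product of $t-1$ generators, divides $M/\gcd(M,u)$. Variable by variable, the required divisibility reduces to $\deg_{x_j}(u_{i_t}) \ge \min(\deg_{x_j}(M),\deg_{x_j}(u))$, which is automatic for $j \le n-m$ (since $\deg_{x_j}(u) = 0$) and for $j = n$ (since $\deg_{x_n}(M) = 0$ when all $i_k \le n-m$). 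The only nontrivial range is $n-m+1 \le j \le n-1$ with $\deg_{x_j}(M) \ge 1$, and there the maximality of $i_t$ settles the matter: some index $i_k$ must contribute $x_j$ to $M$, hence $i_k \ge j-m+1$ and so $i_t \ge j-m+1$; together with $i_t \le n-m < j$, this forces $j-m+1 \le i_t \le j$, so $x_j \mid u_{i_t}$ as needed.

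The main obstacle is the exponent bookkeeping in the second case, and it rests entirely on the uniqueness of $x_n$ in the generator $u_{n-m+1}$, which is what lets the maximum index $i_t$ absorb exactly those variables among $x_{n-m+1},\dots,x_{n-1}$ that the $\gcd$ would strip off $M$.
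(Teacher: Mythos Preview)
Your proof is correct and follows essentially the same idea as the paper's: peel off the rightmost generator of a product $u_{i_1}\cdots u_{i_t}$ and check that what remains witnesses membership in $I_{n,m}^{t-1}$. The only difference is organizational: the paper takes an arbitrary monomial $v$ in the colon, picks $w\in G(I_{n,m}^t)$ dividing $v\cdot(x_{n-m+1}\cdots x_n)$, and splits off $x_{k-m+1}\cdots x_k$ where $k=\max\{j:x_j\mid w\}$ to get $w'\in G(I_{n,m}^{t-1})$ with $w'\mid v$; you instead invoke the standard generator description of $(I:u)$ and verify $M/\gcd(M,u)\in I_{n,m}^{t-1}$ by an explicit variable-by-variable degree count. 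Both arguments hinge on the same observation that the maximum index among the $i_k$ absorbs exactly the overlap with $x_{n-m+1}\cdots x_n$.
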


\begin{proof}
Since $x_{n-m+1}\cdots x_n\in G(I_{n,m})$, the inclusion "$\supseteq$" is clear. In order to prove the converse inclusion, let 
$u\in (I_{n,m}^t:x_{n-m+1}\cdots x_n)$ be a monomial, i.e. $(x_{n-m+1}\cdots x_n)u \in I_{n,m}^t$. It follows that
there exists $w\in G(I_{n,m}^t)$ such that $w|(x_{n-m+1}\cdots x_n)u$. 

We let $k:=\max\{j\;:\;x_j|w\}$. If $k\leq n-m$ then $w|(x_{n-m+1}\cdots x_n)u$ implies $w|u$, hence $u\in I_{n,m}^t\subseteq I_{n,m}^{t-1}$.
If $k\geq n-m+1$, since $w\in G(I_{n,m}^t)$, then $x_{k-m+1}\cdots x_k|w$. Therefore $w=(x_{k-m+1}\cdots x_k)w'$ such that 
$w'\in G(I_{n,m}^{t-1})$. Moreover, since $w|(x_{n-m+1}\cdots x_n)u$,
it follows that $w'|(x_{k+1}\cdots x_n)u$ and thus $u\in I_{n,m}^{t-1}$, as required. 
\end{proof}

\begin{lema}\label{inmt2}
Let $1\leq m\leq n$, $2\leq k\leq m$ and $t\geq 2$ be some integers. Then:
 $$((I_{n,m}^t:x_{n-k+2}\cdots x_n),x_{n-m+1}\cdots x_{n-k+1} )=(I_{n-k,m}^t,x_{n-m+1}\cdots x_{n-k+1} ).$$
\end{lema}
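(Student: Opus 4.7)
The plan is to prove the two inclusions separately, with the nontrivial one being "$\subseteq$".

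For "$\supseteq$", I would observe that every generator of $I_{n-k,m}$ is also a generator of $I_{n,m}$, so $I_{n-k,m}^t \subseteq I_{n,m}^t$, and since $u := x_{n-k+2}\cdots x_n$ multiplies any such element back into $I_{n,m}^t$, we get $I_{n-k,m}^t \subseteq (I_{n,m}^t : u)$. The term $v := x_{n-m+1}\cdots x_{n-k+1}$ trivially sits in the right-hand side.

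For "$\subseteq$", since both sides are monomial ideals, it suffices to take a monomial $w$ with $wu \in I_{n,m}^t$ and $v \nmid w$, and show $w \in I_{n-k,m}^t$. By definition, there exist indices $1\le i_1\le\cdots\le i_t\le n-m+1$ with $g^{(i_1)}\cdots g^{(i_t)}\mid wu$, where $g^{(i)}:=x_i x_{i+1}\cdots x_{i+m-1}$. The core claim is that necessarily $i_t\le n-k-m+1$; once that is known, all $g^{(i_j)}$ lie in $I_{n-k,m}$ and involve only variables $x_1,\dots,x_{n-k}$, which are disjoint from the support $\{x_{n-k+2},\dots,x_n\}$ of $u$. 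Therefore $g^{(i_1)}\cdots g^{(i_t)}\mid w$, giving $w\in I_{n-k,m}^t$.

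The main obstacle — and the step I expect to be the real content — is ruling out $i_t\ge n-k-m+2$. I would argue by contradiction: suppose $i_t\ge n-k-m+2$. Combined with the standing bound $i_t\le n-m+1$ and the hypothesis $k\le m$ (which gives $n-k-m+2\ge n-2m+2$), a direct check shows the interval $[i_t,\;i_t+m-1]$ of consecutive indices covered by $g^{(i_t)}$ contains both $n-m+1$ and $n-k+1$, hence all of $\{n-m+1,\dots,n-k+1\}$. In particular $v\mid g^{(i_t)}$. Since none of the variables $x_{n-m+1},\dots,x_{n-k+1}$ appear in $u$, the divisibility $g^{(i_1)}\cdots g^{(i_t)}\mid wu$ forces each of these variables to divide $w$, so $v\mid w$, contradicting our assumption.

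Once this key inequality is established, the remainder is the short routine argument above, and the lemma follows.
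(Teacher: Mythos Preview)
Your proof is correct and follows essentially the same approach as the paper's: both reduce the nontrivial inclusion to showing that every factor $g^{(i_j)}$ of a generator dividing $wu$ must lie in $G(I_{n-k,m})$, using that the variables of $v=x_{n-m+1}\cdots x_{n-k+1}$ are absent from $u$. The paper phrases this as ``$v\nmid w \Rightarrow v\nmid w_i$ for all $i$'', while you take the contrapositive and check only the largest index $i_t$; the underlying observation (that any generator $g^{(i)}$ with $n-k-m+2\le i\le n-m+1$ is divisible by all of $v$) is identical.
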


\begin{proof}
The inclusion "$\supseteq$" is obvious. In order to prove the converse inclusion "$\subseteq$", we choose a monomial
$u\in (I_{n,m}^t:x_{n-k+2}\cdots x_n)$. If $x_{n-m+1}\cdots x_{n-k+1}|u$ then there is nothing to prove.
Assume that $x_{n-m+1}\cdots x_{n-k+1}\nmid u$.

Since $u\in (I_{n,m}^t:x_{n-k+2}\cdots x_n)$, it follows that there exists $w\in G(I_{n,m}^t)$ such that $w|(x_{n-k+2}\cdots x_n)u$.
Note that $w=w_1\cdots w_t$ with $w_i\in G(I_{n,m})$ for all $1\leq i\leq t$. Since $x_{n-m+1}\cdots x_{n-k+1}\nmid u$, we can
deduce that $x_{n-m+1}\cdots x_{n-k+1}\nmid w_i$ for all $1\leq i\leq t$. Hence, $w_i\in G(I_{n-k,m})$ for all $i$ and therefore
$w\in G(I_{n-k,m}^t)$. It follows that $w|u$ and therefore $u\in I_{n-k,m}^t$, as required.
\end{proof}

\begin{prop}\label{t1}
With the above notation, we have:
$$\sdepth(S/I_{n,m}^t) ,\depth(S/I_{n,m}^t) \geq \begin{cases} n -t+2 -
  \left\lfloor \frac{n-t+2}{m+1} \right\rfloor - \left\lceil \frac{n-t+2}{m+1} \right\rceil, & t \leq n+1-m
  \\ m-1,& t > n+1-m \end{cases}.$$
\end{prop}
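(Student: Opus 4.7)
The plan is to induct on $t\geq 1$, with base case $t=1$ given by Theorem~\ref{mirpat}. Write $d(n,m,t)$ for the right-hand side of the claim, set $u:=x_{n-m+1}\cdots x_n$, and note that by Lemma~\ref{inmt}, $(I_{n,m}^t:u)=I_{n,m}^{t-1}$. The short exact sequence
$$0\to S/I_{n,m}^{t-1}\xrightarrow{\cdot u}S/I_{n,m}^t\to S/(I_{n,m}^t,u)\to 0,$$
combined with Lemma~\ref{l11}(1) and Lemma~\ref{asia}, reduces both the $\depth$ and $\sdepth$ bounds to the pair of estimates $\depth S/I_{n,m}^{t-1}\geq d(n,m,t)$ and $\depth S/(I_{n,m}^t,u)\geq d(n,m,t)$, together with their $\sdepth$ analogues. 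The first follows at once from the inductive hypothesis and the elementary monotonicity $d(n,m,t-1)\geq d(n,m,t)$, a consequence of the fact that $g(k):=k-\lfloor k/(m+1)\rfloor-\lceil k/(m+1)\rceil$ is non-decreasing and that the value $m-1$ is stable in the regime $t>n+1-m$.

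The crux is the second estimate. Here Lemma~\ref{inmt2} with $k=m$, together with the identity $(u:x_{n-m+2}\cdots x_n)=(x_{n-m+1})$, yields $((I_{n,m}^t,u):x_{n-m+2}\cdots x_n)=(I_{n-m,m}^t,x_{n-m+1})$, so that the short exact sequence
$$0\to S/(I_{n-m,m}^t,x_{n-m+1})\xrightarrow{\cdot x_{n-m+2}\cdots x_n}S/(I_{n,m}^t,u)\to S/(I_{n,m}^t,x_{n-m+2}\cdots x_n)\to 0$$
bounds the middle depth by the minimum of its flanking depths. Since $x_{n-m+1}$ is regular on $S/I_{n-m,m}^t$, Lemma~\ref{lem}(5)--(6) and the inductive hypothesis give the left flanking depth as $\geq d(n-m,m,t)+m-1$, which is $\geq d(n,m,t)$ by the elementary check $g(k)-g(k-m)\leq m-1$ (in any $m$ consecutive transitions of $g$, at least one of the two flat transitions per period $m+1$ must occur). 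The right flanking term is then handled by iterating the same construction with the successively shorter products $x_{n-m+3}\cdots x_n,\ldots,x_n$, using Lemma~\ref{inmt2} at each step with decreasing $k$; the chain terminates at $S/(I_{n-1,m}^t,x_n)$, whose depth equals $\depth_{K[x_1,\ldots,x_{n-1}]}K[x_1,\ldots,x_{n-1}]/I_{n-1,m}^t$ by Lemma~\ref{lem}(5)--(6) and is controlled by the induction hypothesis on $n$.

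The main obstacle is the arithmetic of combining these iterated bounds: since $g$ is only weakly monotone, the raw bound at an intermediate step can undershoot $d(n,m,t)$ by one unit, so a careful case analysis on the residue of $n-t+2$ modulo $m+1$ is needed to verify that the combined minimum still meets the target at every link of the chain. Extra care is required at the boundary between $t\leq n+1-m$ and $t>n+1-m$, where the formula transitions to the stable value $m-1$, and one must track how the induction folds into this stable regime. The $\sdepth$ version is proved in strict parallel, since at every stage only the ``min of two'' form of the depth lemma is invoked, whose $\sdepth$ analogue is precisely Lemma~\ref{asia}.
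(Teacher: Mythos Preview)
Your plan has a genuine gap at the terminal link of your chain. After iterating down to $S/(I_{n-1,m}^t,x_n)\cong K[x_1,\ldots,x_{n-1}]/I_{n-1,m}^t$, the only bound available from induction on $n$ is $\geq d(n-1,m,t)$. But whenever $n-t+2\equiv r\pmod{m+1}$ with $2\leq r\leq m$ one has $d(n-1,m,t)=d(n,m,t)-1$, and this is not a slack estimate: by the equality in Theorem~\ref{t2} the depth of $S_{n-1}/I_{n-1,m}^t$ is \emph{exactly} $d(n-1,m,t)$. Hence the right flank in your last short exact sequence really sits one below the target, and Lemma~\ref{l11}(1) then only yields $\depth S/(I_{n,m}^t,u)\geq d(n,m,t)-1$. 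No ``careful case analysis on residues'' can repair this, because the defect is in the actual depth of the flanking module, not in the arithmetic of the bound. A concrete failure is $(n,m,t)=(8,2,2)$: here $d(8,2,2)=3$ but your chain gives only $\depth S/(I_{8,2}^2,x_7x_8)\geq\min\{3,d(7,2,2)\}=\min\{3,2\}=2$.

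The paper avoids this by a different decomposition. Instead of coloning by the whole product $u$ at once, it peels off one variable at a time, setting $L_j=(L_{j-1}:x_{n-m+j})$ and $U_j=(L_{j-1},x_{n-m+j})$; each $U_j$ is then broken down by a \emph{second} layer of short exact sequences using the auxiliary monomial $w_j=x_{n-2m+j}\cdots x_{n-m}$. The crucial point is that by Lemma~\ref{inmt} one gets $(U_j:w_j)=(I_{n-m+j-1,m}^{\,t-1},x_{n-m+j})$, so the inner level drops $t$ by one. With $n'=n-m+j-1$ and $t'=t-1$ one has $n'-t'+2=(n-t+2)-(m-j)$, and the $m-j$ free variables give back exactly the shortfall, so $\varphi(n-m+j-1,m,t-1)+m-j\geq\varphi(n,m,t)$ in all residue classes. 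Your scheme, by contrast, only reduces $n$ at the inner level (keeping $t$ fixed), which produces $n'-t+2=(n-t+2)-(m-j)-1$, one step too far for the available compensation. The missing idea is precisely this second invocation of Lemma~\ref{inmt} inside the treatment of $S/(I_{n,m}^t,u)$.
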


\begin{proof}
We denote 
$$\varphi(n,m,t):=\begin{cases} n -t+2 -
 \left\lfloor \frac{n-t+2}{m+1} \right\rfloor - \left\lceil \frac{n-t+2}{m+1} \right\rceil, & t \leq n+1-m
 \\ m-1,& t > n+1-m \end{cases}.$$
In order to prove that $\sdepth(S/I_{n,m}^t)\geq \varphi(n,m,t)$, we use induction on $n,m,t\geq 1$. 

If $t=1$, from Theorem \ref{mirpat} it follows that:
$$\sdepth(S/I_{n,m})=n+1 - \left\lfloor \frac{n+1}{m+1} \right\rfloor - \left\lceil \frac{n+1}{m+1} \right\rceil = \varphi(n,m,1),$$
hence we are done. Now, assume $t\geq 2$. %If $m>n$ then there is nothing to prove, since $I_{n,m}=0$.
If $n=m$, then $I_{n,n}=(x_1\cdots x_n)$ and, consequently, $I_{n,n}^t=(x_1^t\cdots x_n^t)$ is a principal ideal. Hence, from 
Lemma \ref{lem}(5), we get:
$$\sdepth(S/I_{n,n}^t)=n-1=\varphi(n,n,t),$$
and there is nothing to prove. 

If $m=1$, then $I_{n,1}^t=(x_1,\ldots,x_n)^t=\mathbf m^t$, where $\mathbf m=(x_1,\ldots,x_n)$ is the graded maximal monomial ideal of $S$. Hence, we have:
$$\sdepth(S/I_{n,1}^t)=\sdepth(S/\mathbf m^t)=0=\varphi(n,1,t),$$
and there is nothing to prove. Thus, we may assume $n>m\geq 2$. 

If $n\leq 2m-1$, then 
\begin{align*}
& I_{n,m}=x_{n-m+1}\cdots x_{m} \widetilde I_{n,m},\text{ where } \\
& \widetilde I_{n,m}=(x_1x_2\cdots x_{n-m},x_2\cdots x_{n-m}x_{m+1},\ldots, x_{n-m}x_{m+1}\cdots x_{n-1},x_{m+1}\cdots x_n).
\end{align*}
It follows that $I_{n,m}^t=x_{n-m+1}^t \cdots x_{m}^t \widetilde I_{n,m}^t$. Therefore, we have that:
\begin{equation}\label{eq1}
\frac{S}{I_{n,m}^t} = \frac{S}{x_{n-m+1}^t \cdots x_{m}^t \widetilde I_{n,m}^t}.
\end{equation}
On the other hand, we have:
\begin{equation}\label{eq2}
\frac{S}{\widetilde I_{n,m}^t} = \left(\frac{K[x_1,\ldots,x_{n-m},x_{m+1},\ldots,x_n]}{\widetilde I_{n,m}^t}\right)[x_{n-m+1},\ldots,x_m].
\end{equation}
Also, by renumbering the variables, we note that:
\begin{equation}\label{eq3}
\frac{K[x_1,\ldots,x_{n-m},x_{m+1},\ldots,x_n]}{\widetilde I_{n,m}^t} \cong \frac{K[x_1,\ldots,x_{2(n-m)}]}{I_{2(n-m),n-m}^t}.
\end{equation}
From \eqref{eq1}, \eqref{eq2}, \eqref{eq3}, Lemma \ref{lem}(4,6) and the induction hypothesis, it follows that:
$$ \sdepth(S/I_{n,m}^t) = \sdepth(S_{2(n-m)}/I_{n-m,m}^t) + 2m - n  \geq  $$
\begin{equation}\label{doin-m}
\geq \varphi(2(n-m),n-m,t)+2m-n = \varphi(n,m,t),
\end{equation}
where we denoted $S_{k}:=K[x_1,\ldots,x_k]$.

In the following, we assume $n\geq 2m$.
We let $L_0:=I_{n,m}^t$ and, inductively, we consider the monomial ideals:
\begin{equation}\label{uj}
 L_j:=(L_{j-1}:x_{n-m+j}),\; U_j:=(L_{j-1},x_{n-m+j})\text{ for }1\leq j\leq m.
\end{equation}
We consider the short exact sequences:
\begin{align*}
& 0 \longrightarrow \frac{S}{L_1} \longrightarrow \frac{S}{L_0} \longrightarrow \frac{S}{U_1}
\longrightarrow 0 \\
& 0 \longrightarrow \frac{S}{L_2} \longrightarrow \frac{S}{L_1} \longrightarrow 
\frac{S}{U_2} \longrightarrow 0  
\end{align*}
$$ \vdots $$
\begin{equation}\label{exacte}
0 \rightarrow \frac{S}{L_m} \rightarrow \frac{S}{L_{m-1}} \rightarrow 
\frac{S}{U_m} \rightarrow 0. 
\end{equation}
From Lemma \ref{inmt} it follows that 
$L_m=(I_{n,m}^t:x_{n-m+1}\cdots x_n)=I_{n,m}^{t-1}$. Hence, by induction hypothesis, it follows that:
\begin{equation}\label{ec2}
\sdepth(S/L_m)=\sdepth(S/(I_{n,m}^t:x_{n-m+1}\cdots x_n)) \geq \varphi(n,m,t-1) \geq \varphi(n,m,t).
\end{equation}
Note that, from \eqref{ec2} and Lemma \ref{lem}(1) it follows that:
\begin{equation}\label{ec3}
\sdepth(S/I_{n,m}^t)\leq \sdepth(S/I_{n,m}^{t-1}).
\end{equation}
Also, from \eqref{uj}, it follows that:
$$ U_j = (L_{j-1},x_{n-m+j}) = (( I_{n,m}^t:x_{n-m+1}\cdots x_{n-m+j-1}), x_{n-m+j}) = $$
$$ = ((I_{n,m}^t,x_{n-m+j}):x_{n-m+1}\cdots x_{n-m+j-1}) = $$
$$= ((I_{n-m+j-1,m}^t,x_{n-m+j}):x_{n-m+1}\cdots x_{n-m+j-1}) =$$
\begin{equation}\label{ujj}
 = ((I_{n-m+j-1,m}^t:x_{n-m+1}\cdots x_{n-m+j-1}),x_{n-m+j}).
\end{equation}
In particular, $U_1=(I_{n,m}^t,x_{n-m+1})=(I_{n-m,m}^t,x_{n-m+1})$, hence, by induction hypothesis and Lemma \ref{lem}, we have:
\begin{equation}\label{u1}
\sdepth(S/U_1)\geq \varphi(n-m,m,t)+m-1.
\end{equation}
From Euclid's division lemma, there exists some integers $q$ and $r$ such that
\begin{equation}\label{euclid}
n-t+2=(m+1)q+r,\text{ where }0\leq r\leq m.
\end{equation}
It follows that:
\begin{equation}\label{pisi1}
\left\lfloor \frac{n-t+2}{m+1} \right\rfloor + \left\lceil \frac{n-t+2}{m+1} \right\rceil =
 2q +  \left\lfloor \frac{r}{m+1} \right\rfloor + \left\lceil \frac{r}{m+1} \right\rceil = \begin{cases} 2q,& r=0 \\ 2q+1,& r\geq 1 \end{cases}.
\end{equation}
Also:
\begin{equation}\label{pisi2}
\left\lfloor \frac{n-m-t+2}{m+1} \right\rfloor + \left\lceil \frac{n-m-t+2}{m+1} \right\rceil =
 2q - \left\lceil \frac{r-m}{m+1} \right\rceil - \left\lfloor \frac{r-m}{m+1} \right\rfloor  = \begin{cases} 2q-1,& r<m \\ 2q,& r = 1 
\end{cases}.
\end{equation}
From \eqref{pisi1} and \eqref{pisi2}, it follows that $\varphi(n-m,m,t)+m-1\geq \varphi(n,m,t)$. Hence, from \eqref{u1},
it follow that:
\begin{equation}\label{uu1}
\sdepth(S/U_1)\geq \varphi(n,m,t).
\end{equation}
Now, we fix $2\leq j\leq m$ and we let $w_j:=x_{n-2m+j}\cdots x_{n-m}$. We let $A_{j,0}:=(U_j,w_j)$ and
$$ A_{j,\ell}:=(A_{j,\ell-1}:x_{n-m-\ell+1})\text{ and }B_{j,\ell}:=(A_{j,\ell-1},x_{n-m-\ell+1}),\text{ for }1\leq \ell \leq m-j.$$
From \eqref{ujj} it follows that:
$$A_{j,\ell}=(A_{j,0}:x_{n-m}\cdots x_{n-m-\ell+1}) = ((U_j:x_{n-m}\cdots x_{n-m-\ell+1}),x_{n-2m+j}\cdots x_{n-m-\ell}) = $$
\begin{equation}\label{ajell}
= ((I_{n-m+j-1,m}^t:x_{n-m-\ell+1}\cdots x_{n-m+j-1}),x_{n-2m+j}\cdots x_{n-m-\ell},x_{n-m+j}),
\end{equation}
for all $0\leq \ell\leq m-j$. Applying Lemma \ref{inmt2} for $n-m+j-1$ instead of $n$ and $k=j+\ell$, it follows that:
\begin{equation}\label{ajl}
A_{j,\ell}= (I_{n-m-\ell-1,m}^t, x_{n-2m+j}\cdots x_{n-m-\ell}  ,x_{n-m+j}),\text{ for all }0\leq \ell\leq m-j.
\end{equation}
We consider the short exact sequences:
\begin{align*}
& 0 \longrightarrow \frac{S}{(U_j:w_j)} \longrightarrow \frac{S}{U_j} \longrightarrow \frac{S}{A_{j,0}}
\longrightarrow 0 \\
& 0 \longrightarrow \frac{S}{A_{j,1}} \longrightarrow \frac{S}{A_{j,0}} \longrightarrow 
\frac{S}{B_{j,1}} \longrightarrow 0  
\end{align*}
$$ \vdots $$
\begin{equation}\label{exacte2}
0 \rightarrow \frac{S}{A_{j,m-j}} \rightarrow \frac{S}{A_{j,m-j-1}} \rightarrow 
\frac{S}{B_{j,m-j}} \rightarrow 0. 
\end{equation}
From \eqref{ujj} and Lemma \ref{inmt}, it follows that:
\begin{equation}\label{ujw}
(U_j:w_j)=((I_{n-m+j-1,m}^t:x_{n-2m+j}\cdots x_{n-m+j-1}),x_{n-m+j})=(I_{n-m+j-1,m}^{t-1},x_{n-m+j})
\end{equation}
Hence, by induction hypothesis, it follows that:
\begin{equation}\label{ujpw}
\sdepth(S/(U_j:w_j)) \geq \varphi(n-m+j-1,m,t-1)+m-j.
\end{equation}
By straightforward computations we note that: 
\begin{equation}\label{cutu}
\varphi(n-m+j-1,m,t-1)+m-j \geq \varphi(n-1,m,t-1) = \varphi(n,m,t),\text{ for all }1\leq j\leq m.
\end{equation}
Therefore, from \eqref{ujpw} and \eqref{cutu}, we obtain:
\begin{equation}\label{ujws}
\sdepth(S/(U_j:w_j)) \geq \varphi(n,m,t).
\end{equation}
% From \eqref{uj}, it follows that:
% \begin{equation}
% (U_j,w_j)=((I_{n-m+j-1,m}^t:x_{n-m+1}\cdots x_{n-m+j-1}),w_j,x_{n-m+j}).
% \end{equation}
% On the other hand, we have that $((I_{n-m+j-1,m}^t:x_{n-m+1}\cdots x_{n-m+j-1}),w_j)=(I_{n-m-1,m}^t,w_j)$, therefore
% \begin{equation}\label{aj0}
% A_{j,0}=(U_j,w_j) = (I_{n-m-1,m}^t,w_j,x_{n-m+j}).
% \end{equation}
% Using similar arguing, it follows that
From %\eqref{aj0} and 
\eqref{ajl}, it follows that
\begin{equation}\label{bjl}
B_{j,\ell}=(A_{j,\ell-1},x_{n-m-\ell+1})= (I_{n-m-\ell,m}^t,  x_{n-m-\ell+1}  ,x_{n-m+j}),\text{ for all }1\leq \ell\leq m-j.
\end{equation}
From \eqref{ajl}, Lemma \ref{lem}, the induction hypothesis and straightforward computations, it follows that:
\begin{equation*}
\sdepth(S/A_{j,m-j})=\sdepth(S/(I_{n-2m+j-1,m}^t, x_{n-2m+j}, x_{n-m+j})) \geq 
\end{equation*}
\begin{equation}\label{ajms}
\geq \varphi(n-2m+j-1,m,t)+2m-j-1 \geq \varphi(n,m,t).
\end{equation}
Similarly, from \eqref{bjl} and Lemma \ref{lem} it follows that
\begin{equation}\label{bjls}
\sdepth(S/B_{j,\ell}) = \varphi(n-m-\ell,m,t)+m+\ell \geq \varphi(n,m,t),\text{ for all }1\leq \ell\leq m-j.
\end{equation}
Now, from \eqref{ujws}, \eqref{ajms}, \eqref{bjls} and the short exact sequences \eqref{exacte2}, we conclude that
\begin{equation}\label{ujura}
\sdepth(S/U_j)\geq \varphi(n,m,t),\text{ for all }2\leq j\leq m.
\end{equation}
Also, from \eqref{ec2}, \eqref{uu1}, \eqref{ujura} and the short exact sequences \eqref{exacte}, we conclude that
\begin{equation}\label{maimare}
\sdepth(S/I_{n,m}^t)\geq \varphi(n,m,t).
\end{equation}
The proof of the inequality $\depth(S/I_{n,m}^t)\geq \varphi(n,m,t)$ is similar, using Lemma \ref{l11}(2) instead of Lemma \ref{asia}
and, also, the statements in Lemma \ref{lem} regarding $\depth$.
\end{proof}

Let $t,m\geq 2$ be two integers. %Assume that $t+m=ma+b$, where $1\leq b\leq m$.
In the ring $S_{t+m}:=K[x_1,x_2,\ldots,x_{t+m}]$, we consider the monomial ideal:
$$U_{m,t}=(x_{i_1}\cdots x_{i_m} \in S_{m+t}\;:\; i_j\cong j (\bmod\; m),\;1\leq j\leq m ).$$
% \{\widehat{i_1},\ldots,\widehat{i_m}\}=\{\widehat{0},\ldots,\widehat{m-1}\}),$$
% where $\widehat{a}$ is the class of $a$ in $\mathbb Z/m\mathbb Z$. 
\begin{lema}\label{l1}
With the above notations, we have that: 
\begin{enumerate}
\item[(1)] $\depth(S_{t+m}/U_{m,t})=m-1$.
\item[(2)] $m-1\leq \sdepth(S_{t+m}/U_{m,t})\leq t+m-1-\left\lceil \frac{t}{m} \right\rceil$.
\item[(3)] $\sdepth(U_{m,t}) \leq  t+m -\left\lfloor \frac{\lceil \frac{t}{m} \rceil + 1}{2} \right\rfloor$.
\item[(4)] $\sdepth(U_{m,t}) \geq  t+m - (t+m - m\lceil \frac{t}{m} \rceil) \left\lfloor \frac{\lceil \frac{t}{m} \rceil + 1}{2} \right\rfloor - 
           (m\lceil \frac{t}{m} \rceil - t)\left\lfloor \frac{\lceil \frac{t}{m} \rceil}{2} \right\rfloor$.
\end{enumerate}
\end{lema}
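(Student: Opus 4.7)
The plan is to factor $U_{m,t} = V_1 V_2 \cdots V_m$, where $V_j := (x_i : 1 \leq i \leq t+m, \; i \equiv j \pmod m) \subset S_{t+m}$ is the monomial prime on the variables of residue class $j$. Since the $V_j$ have pairwise disjoint supports, $U_{m,t}$ equals the intersection $V_1 \cap V_2 \cap \cdots \cap V_m$ as well. Writing $t = qm + r$ with $0 \leq r < m$, the class sizes are $a_j = q+2$ for $1 \leq j \leq r$ and $a_j = q+1$ for $r < j \leq m$, so $\max_j a_j = \lceil t/m \rceil + 1$ and $\sum_j a_j = t+m$.

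For (1) I would induct on $m \geq 2$. With $W := V_2 \cap \cdots \cap V_m$, the Mayer--Vietoris short exact sequence
\[
0 \to S/(V_1 \cap W) \to S/V_1 \oplus S/W \to S/(V_1+W) \to 0
\]
has accessible terms: $S/V_1$ is polynomial of depth $t+m-a_1$; $S/W$ has the same product-of-primes shape with one fewer factor and, by Lemma \ref{lem}(6) and induction, has depth $m-2+a_1$; and $S/(V_1+W) \cong K[\text{classes } 2, \ldots, m]/W$ has depth $m-2$. Depth Lemma parts (2) and (3) then pin $\depth(S/U_{m,t}) = m-1$.

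For (2) the upper bound follows from the standard inequality $\sdepth_S(S/I) \leq \dim(S/P)$ for $P \in \operatorname{Min}(I)$, applied to $P = V_{j^*}$ with $a_{j^*}$ maximal, giving $\sdepth(S/U_{m,t}) \leq t+m-1-\lceil t/m \rceil$. For the lower bound $\sdepth(S/U_{m,t}) \geq m-1$, I partition the $K$-basis of $S/U_{m,t}$ into subsets $P_j = \{u : u \text{ uses classes } 1, \ldots, j-1 \text{ and avoids class } j\}$ for $j = 1, \ldots, m$, and split each $P_j$ further by fixing the smallest-indexed class-$i$ variable of $u$ for each $i < j$; the resulting Stanley pieces have free-variable sets of size at least $(j-1) + \sum_{i > j}a_i$, which a short case analysis with $t = qm+r$ shows is $\geq m-1$ for every $j$.

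For (3) and (4), which concern the ideal $U_{m,t}$ itself, I mirror the canonical construction on multiples of generators. The lower bound (4) is obtained by pairing variables within each class $j$ via $x_{j,k} \leftrightarrow x_{j, a_j-k+1}$, absorbing $\lfloor a_j/2 \rfloor$ indices into the anchor of each Stanley piece and summing contributions from the $m-s$ classes of size $q+2$ and the $s$ classes of size $q+1$, with $s := m\lceil t/m \rceil - t$. The upper bound (3) comes from a Bir\'o--Howard--Keller--Trotter--Young style pigeonhole inside the largest class, forcing any Stanley decomposition of $U_{m,t}$ to produce at least one piece whose free-variable set loses $\lfloor(\lceil t/m \rceil+1)/2\rfloor$ variables there. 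The main obstacle sits in (3) and (4): the asymmetry between classes of sizes $q+1$ and $q+2$ forces a simultaneous pairing across classes that has to be disjoint and exhaustive, and the upper-bound pigeonhole has to be isolated to a single residue class without disturbing pieces anchored elsewhere.
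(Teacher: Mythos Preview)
Your key structural observation---that $U_{m,t}=V_1\cap\cdots\cap V_m$ with $V_j$ the monomial prime on the variables in residue class $j$ modulo $m$---is precisely the paper's starting point (written there as $U_{m,t}=V_{m,1,a+1}\cap\cdots\cap V_{m,m,a}$, with $a=\lceil t/m\rceil$). From this common decomposition the two proofs diverge.

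For (1), your Mayer--Vietoris induction on $m$ is a correct and more explicit alternative to the paper's one-line remark that the $V_j$ are maximal ideals in disjoint polynomial subrings. For the upper bound in (2), your inequality $\sdepth(S/I)\le\dim(S/P)$ for an associated prime $P$ (which follows from Lemma~\ref{lem}(1) applied to a monomial $u$ with $(I:u)=P$) is essentially what the paper obtains by citing \cite[Theorem~1.3(2)]{mirci}.

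The gap is in the lower bound of (2) and in (3)--(4). The paper does not construct anything here: it simply invokes standing results on the Stanley depth of intersections of monomial primes with pairwise disjoint supports, namely \cite[Corollary~1.9(3)]{mirci} for $\sdepth(S/U_{m,t})\ge m-1$, and \cite[Theorem~1.1]{ishaq} and \cite[Corollary~1.8]{ishaq} for the two bounds on $\sdepth(U_{m,t})$. You instead try to rederive these from first principles. Your sketch for the lower bound of (2) does not actually produce pieces of the form $vK[Z]$ (fixing ``the smallest-indexed class-$i$ variable'' for each $i<j$ does not by itself carve $P_j$ into Stanley spaces), and for (3)--(4) you explicitly flag ``the main obstacle'' without resolving it. That obstacle is real: Ishaq's bounds ultimately rest on the Bir\'o--Howard--Keller--Trotter--Young analysis of $\sdepth(\mathfrak m)$, and extending it to a disjoint intersection of primes of varying sizes is nontrivial. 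Reproducing \cite{ishaq} inside this lemma is unnecessary; you should cite it, as the paper does.
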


\begin{proof}
 (1) Assume that $t+m=ma+b$, where $1\leq b\leq m$. Then:
 \begin{equation}\label{umtv}
 U_{m,t}=V_{m,1,a+1}\cap \ldots \cap V_{m,b,a+1}\cap V_{m,b+1,a}\cap \ldots \cap V_{m,m,a},\text{ where }V_{m,j,k}:=
   (x_j,x_{j+m},\ldots,x_{j+(k-1)m}).
 \end{equation}
 Note that, we have a partition: 
 $$G(V_{m,1,a+1})\cup \cdots \cup G(V_{m,b,a+1}) \cup G(V_{m,b+1,a})\cup \cdots \cup G(V_{m,m,a}) = \{x_1,x_2,\ldots,x_{t+m}\}.$$
 Therefore, since $V_{m,j,k}$ is the maximal monomial ideal of $K[G(V_{m,j,k})]$, from the definition of $U_{m,t}$ it follows, using an     
 inductive argument, that $\depth(S/U_{m,t})=m-1$.

 (2) According to \cite[Corollary 1.9(3)]{mirci}, since $U_{m,t}$ is the intersection of $m$ ideals in disjoint sets of variables,
 it follows that $\sdepth(S/U_{m,t})\geq m-1$. On the other hand, according to \cite[Theorem 1.3(2)]{mirci}, we have that:
 $$\sdepth(S/U_{m,t})\leq \sdepth(S/V_{m,1,a+1})=t+m-a-1 = t+m-1-\left\lceil \frac{t}{m} \right\rceil.$$
 (3) According to \cite[Theorem 1.1]{ishaq} and \eqref{umtv}, we have that: 
 $$\sdepth(U_{m,t})\leq t+m - \left\lfloor \frac{a+1}{2} \right\rfloor = t+m -\left\lfloor \frac{\lceil \frac{t}{m} \rceil + 1}{2} \right\rfloor.$$
 (4) According to \cite[Corollary 1.8]{ishaq} and \eqref{umtv}, we have that:
 \begin{align*}
   & \sdepth(U_{m,t})\geq t+m - b\left\lfloor \frac{a+1}{2} \right\rfloor - (m-b)\left\lfloor \frac{a}{2} \right\rfloor = \\
   & = t+m - (t+m - m\left\lceil \frac{t}{m} \right\rceil) \left\lfloor \frac{\lceil \frac{t}{m} \rceil + 1}{2} \right\rfloor - 
	(m\left\lceil \frac{t}{m} \right\rceil - t)\left\lfloor \frac{\lceil \frac{t}{m} \rceil}{2} \right\rfloor.
 \end{align*} 
 Hence, we get the required result.
\end{proof}

Let $q\geq 1$, $t,m\geq 2$, $0\leq r\leq m$ and $n:=(m+1)q+t-1+r$. We consider the monomials:
\begin{align*}
& w(m,t):=(x_2\cdots x_{m+1})(x_3\cdots x_{m+2})\cdots (x_t\cdots x_{t+m-1})\text{ and }\\
& w(m,t,q):=w(m,t)\cdot v(m,t,q),
\text{ where }v(m,t,q)=\prod_{\ell=1}^{q-1}x_{t+\ell(m+1)+1}\cdots x_{t+\ell(m+1)+m-1}.
\end{align*}

As usual, given a monomial $v\in S$, the \emph{support} of $v$, denoted by $\supp(v)$,
is the set of variables which divide $v$.

\begin{lema}\label{l2}
With the above notations, we have that:
\begin{align*}
& (I_{n,m}^t:w(m,t,q)) = \begin{cases} U_{m,t}+P_{m,t,q},& r<m \\ U_{m,t}+P_{m,t,q}+(x_{n-m+1}\cdots x_n),& r=m \end{cases}, \text{ where } \\
& P_{m,t,q}:=(x_{t+m+1},x_{t+2(m+1)},\ldots,x_{t+(q-1)(m+1)})+(x_{t+2m+1},x_{t+3m+2},\ldots,x_{t+q(m+1)-1}).
\end{align*} \small{
Also, $P_{m,t,q}=(\{x_{t+m+1},\ldots,x_{n-r}\}\setminus \supp(v(m,t,q)))$ and $|\supp(v(m,t,q))|=(m-1)(q-1)+r$.}
\end{lema}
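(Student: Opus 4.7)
The plan is to prove the identity via two inclusions, then deduce the auxiliary statements about $P_{m,t,q}$ and $\supp(v(m,t,q))$ by direct index arithmetic. First I record that $w(m,t)$ is the product of the $t-1$ generators $x_j\cdots x_{j+m-1}$ of $I_{n,m}$ for $j=2,\ldots,t$, and that $v(m,t,q)$ is a product of $q-1$ pairwise disjoint ``blocks'' of $m-1$ consecutive variables, the $\ell$-th block being $x_{t+\ell(m+1)+1}\cdots x_{t+\ell(m+1)+m-1}$. Hence $w(m,t,q)\in I_{n,m}^{t-1}$, and to show $u\cdot w(m,t,q)\in I_{n,m}^t$ for a monomial $u$ it is enough to exhibit one additional generator of $I_{n,m}$ dividing $u\cdot w(m,t,q)$ whose removal leaves a multiple of the $t-1$ generators supporting $w(m,t)$.

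For the inclusion $\supseteq$ I would check the three types of generators on the right-hand side in turn. The monomial $x_{n-m+1}\cdots x_n$ that appears in the $r=m$ case is already in $G(I_{n,m})$, so the product lies trivially in $I_{n,m}^t$. A generator of $P_{m,t,q}$ is a single variable $x_k$ sitting immediately adjacent to one of the $(m-1)$-blocks of $v(m,t,q)$; combining $x_k$ with that block produces a window of $m$ consecutive indices, hence a generator of $I_{n,m}$, which together with the $t-1$ generators packaged in $w(m,t)$ supplies the missing factor. For a generator $u=x_{i_1}\cdots x_{i_m}$ of $U_{m,t}$ the congruences $i_j\equiv j\pmod m$ with $i_j\in\{1,\ldots,t+m\}$ mean that $u$ carries exactly one variable in each residue class modulo $m$; I would then construct an explicit decomposition of $u\cdot w(m,t)$ into $t$ generators $g_{j_s}=x_{j_s}\cdots x_{j_s+m-1}$ by a greedy column-matching procedure, choosing each starting index $j_s$ so that the variable of $u$ of the appropriate residue class falls inside the corresponding window.

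For the inclusion $\subseteq$ I would fix a monomial $u$ with $u\cdot w(m,t,q)\in I_{n,m}^t$ and pick generators $g_{j_1},\ldots,g_{j_t}$ of $I_{n,m}$ with $g_{j_1}\cdots g_{j_t}\mid u\cdot w(m,t,q)$. The analysis splits according to the starting indices $j_s$. If all $j_s\le t$ then $\prod_s g_{j_s}$ is supported in $\{x_1,\ldots,x_{t+m-1}\}$; matching its multidegree against the trapezoidal exponent profile of $w(m,t)$ forces $u$ to be divisible by a monomial that hits each residue class modulo $m$ in $\{x_1,\ldots,x_{t+m}\}$, i.e.\ by a generator of $U_{m,t}$. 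If some $j_s>t$ then $g_{j_s}$ is essentially disjoint from $\supp(w(m,t))$ and must lie across one of the $(m-1)$-blocks of $v(m,t,q)$ (or along the tail $\{x_{n-r+1},\ldots,x_n\}$ in the $r=m$ case); since every such block contains only $m-1$ consecutive variables and the flanking indices are precisely the generators of $P_{m,t,q}$, the missing variable of $g_{j_s}$ must be supplied by $u$, forcing $u$ into $P_{m,t,q}$ or, in the $r=m$ case, into $(x_{n-m+1}\cdots x_n)$.

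The main obstacle is the combinatorial bookkeeping in the $\subseteq$ direction: one must rule out every overlapping configuration of the $g_{j_s}$ and verify that the variable borrowed from $u$ in the boundary case matches exactly one of the explicit generators of $P_{m,t,q}$. The two auxiliary assertions then follow by direct counting inside the range $\{x_{t+m+1},\ldots,x_{n-r}\}$, using the decomposition $n=(m+1)q+t-1+r$ to track how the blocks of $v(m,t,q)$ and the flanking positions of $P_{m,t,q}$ partition this set.
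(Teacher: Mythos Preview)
Your outline matches the paper's approach: double inclusion, with $\supseteq$ checked generator-type by generator-type and $\subseteq$ reduced to a residue-class degree count. For the $U_{m,t}$ part of $\supseteq$, the paper's concrete device is to iteratively replace $u\in G(U_{m,t})$ by $x_{\min(u)+m}\,u/x_{\min(u)}$ while simultaneously shifting one factor of $w(m,t)$, thereby decreasing $\|u\|:=\max(u)-\min(u)+1$ until $\|u\|=m$; this is precisely a realization of your ``greedy column-matching''.

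There is, however, a genuine off-by-one gap in your $\subseteq$ dichotomy. Your Case~B asserts that any $g_{j_s}$ with $j_s>t$ must straddle an $(m-1)$-block of $v(m,t,q)$ and hence borrow a flanking $P_{m,t,q}$ variable from $u$. This is false for $j_s=t+1$: the window $x_{t+1}\cdots x_{t+m}$ sits entirely inside $\{1,\ldots,t+m\}$, touches no block of $v(m,t,q)$, and the only variable it requires that is missing from $w(m,t,q)$ is $x_{t+m}$, which is \emph{not} a generator of $P_{m,t,q}$. The correct split is $j_s\le t+1$ versus $j_s\ge t+2$; once $j_s\ge t+2$ the window necessarily contains $x_{t+m+1}$ or one of the later flanking variables (consecutive blocks being separated by two such indices), and your Case~B reasoning then applies. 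The paper organizes this by first assuming $u\notin P_{m,t,q}$ (and, when $r=m$, also $u\notin(x_{n-m+1}\cdots x_n)$), so that $x_{t+m+1}\notin\supp(u\cdot w(m,t,q))$; this forces every window into $\{1,\ldots,t+m\}$, after which the residue-class count $\deg_\ell$ on $u'w(m,t)\in S_{t+m}$---your ``trapezoidal profile'' argument---yields $u'\in U_{m,t}$. Shift your case boundary from $t$ to $t+1$ and your sketch goes through.
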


\begin{proof}
In order to prove the equality, we use double inclusion.

First, note that $w(m,t)$ is a minimal monomial generator of $I_{n,m}^{t-1}$.

If $q\geq 2$ and $1\leq \ell\leq q-1$, then 
\begin{equation}\label{ref1}
x_{t+\ell(m+1)}x_{t+\ell(m+1)+1}\cdots x_{t+\ell(m+1)+m-1} \;|\; x_{t+\ell(m+1)}v(m,t,q) \text{ and }
\end{equation}
\begin{equation}\label{ref2}
 x_{t+\ell(m+1)+1}\cdots x_{t+\ell(m+1)+m-1}x_{t+\ell(m+1)+m} \;|\; x_{t+\ell(m+1)+m}v(m,t,q).
\end{equation}
As $x_{t+\ell(m+1)}x_{t+\ell(m+1)+1}\cdots x_{t+\ell(m+1)+m-1}$, 
$x_{t+\ell(m+1)+1}\cdots x_{t+\ell(m+1)+m-1}x_{t+\ell(m+1)+m} \in G(I_{n,m})$, 
$w(m,t)\in G(I_{n,m}^{t-1})$ and $w(m,t,q)=w(m,t)v(m,t,q)$, from \eqref{ref1} and \eqref{ref2} it follows that:
$$x_{t+\ell(m+1)}w(m,t,q),\; x_{t+\ell(m+1)+m}w(m,t,q) \in I_{n,m}^{t}.$$
Hence, we obtain:
\begin{equation}\label{pmtq}
P_{m,t,q} \subset (I_{n,m}^t:w(m,t,q)).
\end{equation}
If $r=m$, since $x_{n+m-1}\cdots x_n\in G(I_{n,m})$, $w(m,t)\in G(I_{n,m}^{t-1})$ and $w(m,t)|w(m,t,q)$, then:
\begin{equation}\label{regalm}
x_{n+m-1}\cdots x_n\in (I_{n,m}^t:w(m,t,q)).
\end{equation}
Given a proper monomial $u\in S$, we denote 
$$\max(u)=\max\{i\;:\;x_i|u\},\;\min(u)=\min\{i\;:\;x_i|u\}\text{ and }||u||=\max(u)-\min(u)+1.$$
We choose $u\in G(U_{m,t})$, i.e. $u=x_{i_1}\cdots x_{i_m}$ with $i_j\cong j(\bmod\;m)$ and we note that $||u||\geq m$.
If $||u||=m$, then $u$ is the product of $m$ consecutive monomials, hence $u\in G(I_{n,m})$.
Therefore $u\cdot w(m,t,q)\in I_{n,m}^t$. Now, assume $||u||>m$. We have:
\begin{align*}
& u\cdot w(m,t) = (x_2\cdots x_{m+1}) \cdots (x_{\min(u)-1}\cdots x_{\min(u)+m-2}) (x_{\min(u)}\cdots x_{\min(u)+m-1}) \cdot \\
& (x_{\min(u)+2}\cdots x_{\min(u)+m+1})\cdots (x_{t}\cdots x_{t+m-1})\cdot x_{\min(u)+m} u/ x_{\min(u)}.
\end{align*}
Note that $w'(m,t):= w(m,t)x_{\min(u)}/x_{\min(u)+m} \in G(I_{n,m}^{t-1})$.
We let $u':=x_{\min(u)+m} u/ x_{min(u)}$.

It is easy to see that $||u'||<||u||$ and $u\cdot w(m,t)=u'\cdot w'(m,t)$.

If $||u'||=m$ then $u'\in G(I_{n,m})$ and,
from above, it follows that $u\cdot w(m,t)\in G(I_{n,m}^t)$.

If $||u'||>m$ then we repeat the same procedure and we obtain $u'':=x_{\min(u')+m} u'/x_{\min(u')}$, with $||u''||<||u'||$.
Since $\min(u')>\min(u)$, we can write:
$$u\cdot w(m,t) = u' \cdot w'(m,t) = u'' \cdot w''(m,t),$$
with $w''(m,t)\in G(I_{n,m}^{t-1})$. If $||u''||=m$, then we are done. Otherwise, we repeat the same procedure until we can
find some $\ell\geq 2$ such that $u\cdot w(m,t) = u^{(\ell)}\cdot w^{(\ell)}(m,t)$, with $w^{(\ell)}(m,t)\in G(I_{n,m}^{t-1})$
and $|u^{(\ell)}||=m$.

Finally, we get $u\cdot w(m,t,q)\in I_{n,m}^t$ and we obtain:
\begin{equation}\label{utm}
U_{t,m} \subset (I_{n,m}^t:w(m,t,q)).
\end{equation}
From \eqref{pmtq}, \eqref{regalm} and \eqref{utm} we complete the proof of "$\subseteq$".
In order to prove the other inclusion, let $u\in S$ be a monomial such that $u\cdot w(m,t,q)\in I_{n,m}^t$.
We write $u=u'\cdot u''$ with $u'\in S_{t+m}$ and $u''\in K[x_{t+m+1},\ldots,x_n]$.

If $u\notin P_{m,t,q}$ then $\supp(u) \cap \{x_{t+m+1}, x_{t+m+2}, \ldots, x_{n-r}\} \subseteq \supp(v(m,t,q))$.
Also, if $r=m$ and $u\notin (x_{n-m+1}\cdots x_n)$, then $\{x_{n-m+1},\ldots, x_n\} \nsubseteq \supp(u)$.

Since $x_{t+m+1}\notin\supp(u)$, $u''\cdot v(t,m,q) \notin I_{n,m}^t$ and $u \cdot w(m,t,q)\in I_{n,m}^t$,
it follows that $u'w(t,m)\in I_{n,m}^t$. 

Given a monomial $v\in S_{t+m}$, $v=x_{a_1}x_{a_2}\cdots x_{a_d}$, where $d=\deg(v)$, we let 
$$\deg_{\ell}(v) = |\{a_i\;:\;a_i\equiv \ell(\bmod\;m)\}|,\text{ for }1\leq \ell\leq m.$$
It is easy to see that $u\in I_{t+m,m}^t$ if and only if $\deg_{\ell}(v)\geq t$ for all $1\leq \ell\leq m$.
Indeed, any minimal monomial generator of $I_{m+t,m}$ is the product of $m$ consecutive variables.

We assume, by contradiction, that $u'\notin U_{m,t}$. It follows that
there exists $1\leq k\leq m$ such that $\deg_{k}(u')=0$. On the other hand, $\deg_{\ell}(w(t,m))=t-1$,
for all $1\leq \ell\leq m$. It follows that $\deg_k(u'w(t,m))=t-1<t$, hence $u'w(t,m)\notin I_{n,m}^t$,
a contradiction.
\end{proof}

\begin{teor}\label{t2}
With the above notations, we have that:
\begin{enumerate}
\item[(1)] $\sdepth(S/I_{n,m}^t) \geq \depth(S/I_{n,m}^t) = \begin{cases} n -t+2 -
 \left\lfloor \frac{n-t+2}{m+1} \right\rfloor - \left\lceil \frac{n-t+2}{m+1} \right\rceil, & t \leq n+1-m
 \\ m-1,& t > n+1-m \end{cases}$.
\item[(2)] $\sdepth(S/I_{n,m}^t) \leq \sdepth(S/I_{n,m}) = n+1 - \left\lfloor \frac{n+1}{m+1} \right\rfloor - \left\lceil \frac{n+1}{m+1} \right\rceil$.
\end{enumerate}
\end{teor}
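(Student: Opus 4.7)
My plan is to prove (1) in two parts. The lower bound $\depth(S/I_{n,m}^t) \geq \varphi(n,m,t)$ is already established in Proposition \ref{t1}, so I only need to produce the matching upper bound and, for (2), the inequality $\sdepth(S/I_{n,m}^t) \leq \sdepth(S/I_{n,m})$. The central tool will be Lemma \ref{lem}(3), namely $\depth(S/(I:u)) \geq \depth(S/I)$ for monomials $u \notin I$, combined with the explicit colon computation in Lemma \ref{l2}.

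For $t \leq n-m$, I would write $n - t + 1 = (m+1)q + r$ with $0 \leq r \leq m$ and $q \geq 1$, so that Lemma \ref{l2} applies to $u := w(m,t,q)$. Letting $J := (I_{n,m}^t : w(m,t,q))$, Lemma \ref{l2} expresses $J$ as $U_{m,t} + P_{m,t,q}$, with an extra principal summand $(x_{n-m+1}\cdots x_n)$ when $r = m$. The crucial observation is that the summands live on pairwise disjoint variable sets: $U_{m,t}$ uses $x_1,\ldots,x_{t+m}$, $P_{m,t,q}$ is generated by variables in $\{x_{t+m+1},\ldots,x_{n-r}\}\setminus \supp(v(m,t,q))$, and (when $r=m$) the extra generator lives on $x_{n-m+1},\ldots,x_n$. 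Since $P_{m,t,q}$ is generated by single variables, passing to $S/J$ collapses those variables, leaving a polynomial extension of $S_{t+m}/U_{m,t}$ by the free variables in $\supp(v(m,t,q)) \cup \{x_{n-r+1},\ldots,x_n\}$ (in the $r=m$ case the last block is further reduced modulo the principal generator). Combining $\depth(S_{t+m}/U_{m,t}) = m-1$ from Lemma \ref{l1}(1) with Lemma \ref{lem}(6)(b) and the depth of a principal quotient, a straightforward count should yield $\depth(S/J) = \varphi(n,m,t)$. Then Lemma \ref{lem}(3) gives the desired inequality $\depth(S/I_{n,m}^t) \leq \varphi(n,m,t)$.

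For $t > n-m$, I would fall back on monotonicity. Lemma \ref{inmt} gives $(I_{n,m}^t:x_{n-m+1}\cdots x_n)=I_{n,m}^{t-1}$, so Lemma \ref{lem}(3) forces $\depth(S/I_{n,m}^t) \leq \depth(S/I_{n,m}^{t-1})$. Iterating down to $t = n-m$, where the previous paragraph delivers $\depth = m-1$, and combining with the lower bound of Proposition \ref{t1}, I conclude that $\depth(S/I_{n,m}^t) = m-1 = \varphi(n,m,t)$ whenever $t \geq n-m$, which completes (1). Part (2) follows from the parallel Stanley-depth monotonicity: Lemma \ref{lem}(1) applied to the same colon identity yields $\sdepth(S/I_{n,m}^t) \leq \sdepth(S/I_{n,m}^{t-1})$ for all $t \geq 2$, and induction down to $t=1$ together with Theorem \ref{mirpat} finish the job.

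The main obstacle will be the arithmetic bookkeeping in the second paragraph: I need to verify that, across the three subcases $r=0$, $0<r<m$, and $r=m$, the total count of the form $(m-1) + |\supp(v(m,t,q))| + r$ (with the extra correction coming from the principal generator when $r=m$) collapses to exactly $\varphi(n,m,t)$ as given by a case analysis on $\lfloor (n-t+2)/(m+1)\rfloor + \lceil (n-t+2)/(m+1)\rceil$. Degenerate cases (such as $m=1$, $n=m$, or $n$ so small that Lemma \ref{l2} does not directly apply) should be dispatched separately, reducing either to Theorem \ref{mirpat} or to the elementary reductions already performed at the beginning of the proof of Proposition \ref{t1}.
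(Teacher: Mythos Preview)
Your proposal is correct and follows essentially the same route as the paper: use Proposition~\ref{t1} for the lower bound, then for the upper bound apply Lemma~\ref{lem}(3) to the colon by $w(m,t,q)$ computed in Lemma~\ref{l2}, reduce to a tensor product involving $S_{t+m}/U_{m,t}$ and Lemma~\ref{l1}(1), and handle the remaining range of $t$ via the monotonicity coming from Lemma~\ref{inmt}; part~(2) is likewise dispatched by Lemma~\ref{lem}(1) and Theorem~\ref{mirpat}. The arithmetic you flag (counting free variables to reach $(m-1)q+r$ when $r<m$, with the extra $-1$ correction from the regular element $x_{n-m+1}\cdots x_n$ when $r=m$) is exactly what the paper does, and your degenerate cases ($t=1$, $m=1$, $n=m$, $m<n\leq 2m-1$) mirror the reductions at the start of Proposition~\ref{t1}.
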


\begin{proof}
 (1) We denote:
$$\varphi(n,m,t):=\begin{cases} n -t+2 -
 \left\lfloor \frac{n-t+2}{m+1} \right\rfloor - \left\lceil \frac{n-t+2}{m+1} \right\rceil, & t \leq n+1-m
 \\ m-1,& t > n+1-m \end{cases}.$$
From Proposition \ref{t1}, it follows that $\depth(S/I_{n,m}^t)\geq \varphi(n,m,t)$.
Therefore, in order to prove that $\depth(S/I_{n,m}^t)=\varphi(n,m,t)$ it suffice to show
that $\depth(S/I_{n,m}^t)\leq \varphi(n,m,t)$. In order to do this, we use induction on $n,m$ and $t$.

The cases $t=1$, $n=m$, $m=1$ and $n\leq 2m-1$ are proved easily, as in the proof of Proposition \ref{t1}.
Now, assume $n\geq 2m$ and $t\geq 2$. If $n\leq m+t-1$, then, by induction hypothesis, Lemma \ref{lem}(3) and Lemma \ref{inmt}, 
it follows that:
\begin{align*}
& \depth(S/I_{n,m}^t) \leq \depth(S/(I_{n,m}^t:x_{n-m+1}\ldots x_n)) = \depth(S/I_{n,m}^{t-1}) \leq \\
& \leq \varphi(n,m,t-1) = m-1 = \varphi(n,m,t).
\end{align*}
Now, assume $n\geq m+t-1$. By Euclid's division, it follows that there exists $q\geq 1$ and $0\leq r\leq m$, such that:
\begin{equation}\label{cute}
 n = q(m+1) +t - 1 + r.
\end{equation}
Asumme $r<m$. According to Lemma \ref{l2}, we have that:
$$ S/(I_{n,m}^t:w(n,m,t)) = (K[x_1,\ldots,x_{m+t}]/U_{m,t})\otimes_K (K[x_{m+t+1},\ldots,x_n]/P_{m,t,q}).$$
Hence, from  Lemma \ref{l1}(1) and Lemma \ref{lem}, it follows that:
$$ \depth(S/I_{n,m}^t) \leq \depth(S/(I_{n,m}^t:w(n,m,t)))=(m-1)+(m-1)(q-1)+r=(m-1)q+r.$$
On the other hand, from \eqref{cute}, we have that:
$$\varphi(n,m,t) = n-t+2 -\left\lfloor \frac{n-t+2}{m+1} \right\rfloor - \left\lceil \frac{n-t+2}{m+1} \right\rceil = 
(m+1)q +1 +r - q - (q+1) =(m-1)q +r,$$
hence $\depth(S/I_{n,m}^t)\leq \varphi(n,m,t)$.
Similarly, in the case $r=m$, since $x_{n-m+1}\cdots x_n$ is regular in $K[x_{m+t+1},\ldots,x_n]/P_{m,t,q}$,
by Lemma \ref{lem}(5), we get:
$$\depth(S/I_{n,m}^t)\leq \depth(S/(I_{n,m}^t:w(n,m,t))) = (m-1)q+r-1 = \varphi(n,m,t).$$
Therefore, $\depth(S/I_{n,m}^t)\leq \varphi(n,m,t)$, for any $n,m$ and $t$.

(2) Since $(I_{n,m}^t:(x_{n-m+1}\cdots x_n)^{t-1})=I_{n,m}$, the required result follows from Lemma \ref{lem}(1) and
Theorem \ref{mirpat}. See also \eqref{ec3}.
\end{proof}

\begin{obs}\label{obs1}\rm
From Lemma \ref{l1}(2), using a similar technique as in the proof of Theorem \ref{t2}, we can deduce that
$\sdepth(S/I_{n,m}^t) \leq  \varphi(n,m,t)+t-\left\lceil \frac{t}{m} \right\rceil.$

Note that $\varphi(n,m,1)\leq \varphi(n,m,t)+t-\lceil \frac{t}{m} \rceil$, hence this upper bound does not improve
the one given in Theorem \ref{t2}(2).
\end{obs}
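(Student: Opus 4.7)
The plan is to mirror the upper-bound argument for $\depth(S/I_{n,m}^t)$ used in the proof of Theorem \ref{t2}(1), replacing each depth-step with its sdepth analogue. That argument had two crucial ingredients: the inequality $\depth(S/(I:u)) \geq \depth(S/I)$ from Lemma \ref{lem}(3), and the \emph{exact} value $\depth(S_{m+t}/U_{m,t}) = m-1$ from Lemma \ref{l1}(1). For sdepth, the corresponding tools are $\sdepth(S/(I:u)) \geq \sdepth(S/I)$ from Lemma \ref{lem}(1) together with the \emph{upper bound} $\sdepth(S_{m+t}/U_{m,t}) \leq t+m-1-\lceil t/m \rceil$ from Lemma \ref{l1}(2). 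Substituting the latter in place of the exact depth is precisely what produces the extra additive term $t - \lceil t/m \rceil$ in the remark's inequality.

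Concretely, I would first dispatch the trivial ranges ($t=1$, $n=m$, $m=1$, $n\leq 2m-1$, and $n\leq m+t-1$) exactly as in Theorem \ref{t2}(1). For the main case $n \geq m+t-1$, write $n = q(m+1)+t-1+r$ with $0\leq r\leq m$ and apply Lemma \ref{lem}(1) to get
$$\sdepth(S/I_{n,m}^t) \;\leq\; \sdepth\bigl(S/(I_{n,m}^t : w(m,t,q))\bigr).$$
By Lemma \ref{l2}, when $r<m$ the right-hand quotient decomposes as
$$\frac{S}{(I_{n,m}^t:w(m,t,q))} \;\cong\; \frac{S_{m+t}}{U_{m,t}} \;\otimes_K\; \frac{K[x_{m+t+1},\ldots,x_n]}{P_{m,t,q}},$$
with the second factor a polynomial ring in $(q-1)(m-1)+r$ variables. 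Iterating Lemma \ref{lem}(6) together with Lemma \ref{l1}(2) then yields
$$\sdepth(S/I_{n,m}^t) \;\leq\; \bigl(t+m-1-\lceil t/m \rceil\bigr) + (q-1)(m-1)+r \;=\; \varphi(n,m,t) + t - \lceil t/m \rceil,$$
using the identity $\varphi(n,m,t) = q(m-1)+r$ already extracted in the proof of Theorem \ref{t2}(1). In the boundary case $r=m$, the extra generator $x_{n-m+1}\cdots x_n$ provided by Lemma \ref{l2} is regular on the second tensor factor, and the same reduction used in Theorem \ref{t2}(1) drops the bound by one to keep the final answer at $\varphi(n,m,t) + t - \lceil t/m \rceil$.

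The main obstacle I expect is the tensor-product step: iterating Lemma \ref{lem}(6) to obtain $\sdepth_S(M \otimes_K K[y_1,\ldots,y_s]) = \sdepth(M)+s$ is routine but needs the polynomial factor to be in fresh variables disjoint from those supporting $M$, which is guaranteed here by the explicit form of $U_{m,t}\subset S_{m+t}$ and $P_{m,t,q}\subset K[x_{m+t+1},\ldots,x_n]$ given in Lemma \ref{l2}. A secondary subtlety is that in the $r=m$ case Lemma \ref{lem}(5) is stated for variables rather than arbitrary regular monomials, so one must repeat the workaround already used in Theorem \ref{t2}(1) rather than appealing to Lemma \ref{lem}(5) verbatim. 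Finally, to substantiate the closing sentence of the remark, one verifies the elementary inequality $\varphi(n,m,1) \leq \varphi(n,m,t) + t - \lceil t/m \rceil$ by a direct comparison of $\lfloor (n+1)/(m+1)\rfloor$ and $\lceil (n+1)/(m+1)\rceil$ against their analogues with $n$ replaced by $n-t+1$; this shows the new bound never beats Theorem \ref{t2}(2).
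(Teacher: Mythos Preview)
Your proposal is correct and follows exactly the approach the paper sketches: replace Lemma~\ref{l1}(1) by the upper bound in Lemma~\ref{l1}(2), replace Lemma~\ref{lem}(3) by Lemma~\ref{lem}(1), and rerun the colon-ideal argument with $w(m,t,q)$ from Lemma~\ref{l2}. One small correction: your ``secondary subtlety'' about Lemma~\ref{lem}(5) is a misreading---that lemma is already stated for an arbitrary regular monomial $u$, not just a variable, so no workaround is needed in the $r=m$ case.
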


\begin{cor}\label{coro}
The projective dimension of $S/I_{n,m}^t$ is:
$$\pd(S/I_{n,m}^t) = \begin{cases} t-2 +
 \left\lfloor \frac{n-t+2}{m+1} \right\rfloor + \left\lceil \frac{n-t+2}{m+1} \right\rceil, & t \leq n+1-m
 \\ n-m+1,& t > n+1-m \end{cases}.$$
\end{cor}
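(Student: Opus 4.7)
The proof is a direct application of the Auslander--Buchsbaum formula together with the depth computation already established in Theorem \ref{t2}(1).

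The plan is as follows. Since $S=K[x_1,\ldots,x_n]$ is a regular (in particular Cohen--Macaulay) local--like graded ring with $\depth(S)=n$, and since $S/I_{n,m}^t$ is a finitely generated $\mathbb Z^n$-graded $S$-module of finite projective dimension, the Auslander--Buchsbaum formula yields
\begin{equation*}
\pd(S/I_{n,m}^t) = \depth(S) - \depth(S/I_{n,m}^t) = n - \depth(S/I_{n,m}^t).
\end{equation*}
Thus the corollary reduces to subtracting from $n$ the two-case formula for $\depth(S/I_{n,m}^t)$ provided by Theorem \ref{t2}(1).

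In the range $t\leq n+1-m$, plugging the depth formula into the displayed Auslander--Buchsbaum identity gives
\begin{equation*}
\pd(S/I_{n,m}^t) = n - \Bigl( n - t + 2 - \Bigl\lfloor \tfrac{n-t+2}{m+1}\Bigr\rfloor - \Bigl\lceil \tfrac{n-t+2}{m+1}\Bigr\rceil \Bigr) = t - 2 + \Bigl\lfloor \tfrac{n-t+2}{m+1}\Bigr\rfloor + \Bigl\lceil \tfrac{n-t+2}{m+1}\Bigr\rceil,
\end{equation*}
which is exactly the first branch of the claimed formula. In the range $t>n+1-m$, the depth equals $m-1$, so $\pd(S/I_{n,m}^t) = n-(m-1) = n-m+1$, matching the second branch.

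There is essentially no obstacle: the only thing worth checking is the applicability of Auslander--Buchsbaum, which is routine since $S$ is a polynomial ring (equivalently, one may localize at the graded maximal ideal $\mathbf m$, or use the graded version of the formula directly). All the substantive work is already contained in Theorem \ref{t2}(1).
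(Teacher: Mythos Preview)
Your proof is correct and follows exactly the same approach as the paper's: both apply the Auslander--Buchsbaum formula $\pd(S/I_{n,m}^t)=n-\depth(S/I_{n,m}^t)$ and substitute the depth value from Theorem~\ref{t2}(1).
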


\begin{proof}
It follows immediately from Theorem \ref{t2} and Ausl\"ander-Buchsbaum's Theorem; 
see \cite[Theorem 3.5.13]{real}.
\end{proof}

\begin{teor}\label{t3}
With the above notation, we have:
\begin{enumerate} 
\item[(1)] $\sdepth(I_{n,m}^t)\geq \depth(I_{n,m}^t)=\begin{cases} n -t+3 -
 \left\lfloor \frac{n-t+2}{m+1} \right\rfloor - \left\lceil \frac{n-t+2}{m+1} \right\rceil, & t \leq n+1-m
 \\ m,& t > n+1-m \end{cases}$.
\item[(2)] $\sdepth(I_{n,m}^t)\leq \min\{ n+1 - \lfloor \frac{n-t+1}{m+1} \rfloor,\; n - \left\lfloor \frac{\lceil \frac{t}{m} \rceil + 1}{2} \right\rfloor \}$.
\end{enumerate}
\end{teor}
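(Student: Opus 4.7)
For the depth equality in part (1), I would apply Lemma \ref{l11} to the short exact sequence $0\to I_{n,m}^t\to S\to S/I_{n,m}^t\to 0$: since $\depth S=n$ strictly exceeds $\depth(S/I_{n,m}^t)=\varphi(n,m,t)$ by Theorem \ref{t2}(1), parts (2) and (3) of the Depth Lemma together pin down $\depth(I_{n,m}^t)=\varphi(n,m,t)+1$, which matches the displayed piecewise formula by routine arithmetic. For the Stanley inequality $\sdepth(I_{n,m}^t)\geq\depth(I_{n,m}^t)$, I would re-run the inductive argument of Proposition \ref{t1} on the ideal $I_{n,m}^t$ itself, replacing every short exact sequence of quotients by the parallel sequence of ideals and invoking Lemma \ref{asia} at each step; the base cases ($t=1$ via Theorem \ref{mirpat}, $n=m$, $m=1$, $n\leq 2m-1$ via the factorisation used in \eqref{eq1}--\eqref{eq3}) transfer with only cosmetic changes, and the arithmetic estimates \eqref{pisi1}--\eqref{cutu} are reused verbatim.

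For the first upper bound in part (2), I would iterate Lemma \ref{inmt} together with Lemma \ref{lem}(2) to obtain
\begin{equation*}
\sdepth(I_{n,m}^t)\leq\sdepth((I_{n,m}^t:x_{n-m+1}\cdots x_n))=\sdepth(I_{n,m}^{t-1})\leq\cdots\leq\sdepth(I_{n,m}),
\end{equation*}
and then bound $\sdepth(I_{n,m})\leq n+1-\lfloor n/(m+1)\rfloor$ by a direct Stanley-decomposition argument: isolating the $\lfloor n/(m+1)\rfloor$ pairwise variable-disjoint generators $x_{(j-1)(m+1)+1}\cdots x_{(j-1)(m+1)+m}$ of $I_{n,m}$ forces a counting constraint on the $|Z_i|$ of any Stanley decomposition. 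Since $\lfloor n/(m+1)\rfloor\geq\lfloor(n-t+1)/(m+1)\rfloor$ for all $t\geq 1$, this $t$-independent estimate gives the stated $t$-dependent bound.

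For the second upper bound, I would apply Lemma \ref{lem}(2) with $u=w(m,t,q)$ and Lemma \ref{l2} to obtain $\sdepth(I_{n,m}^t)\leq\sdepth(U_{m,t}+P_{m,t,q})$ (with the extra term $(x_{n-m+1}\cdots x_n)$ when $r=m$, handled in parallel). Since $U_{m,t}\subset K[x_1,\ldots,x_{t+m}]$ and $P_{m,t,q}$ is generated by variables disjoint from those of $U_{m,t}$, I would then establish the structural estimate
\begin{equation*}
\sdepth_S(U_{m,t}+P_{m,t,q})\leq\sdepth(U_{m,t})+(n-t-m),
\end{equation*}
either by a direct Stanley-decomposition analysis or by a careful iterative argument controlling how the sdepth changes as each variable generator of $P_{m,t,q}$ is adjoined; substituting the upper bound from Lemma \ref{l1}(3) then yields $n-\lfloor(\lceil t/m\rceil+1)/2\rfloor$. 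The main obstacle will be this last structural estimate — adjoining variables as new generators of a monomial ideal does not behave monotonically with respect to sdepth in general, so the clean bound requires some combinatorial care — whereas the depth equality, the sdepth lower bound, and the reduction to a $t=1$ statement about $\sdepth(I_{n,m})$ are essentially routine adaptations of arguments already present in the paper.
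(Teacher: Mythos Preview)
Your plan for the depth equality in part (1) is fine and matches the paper. The genuine gap is in your sdepth lower bound for $I_{n,m}^t$. The short exact sequences of Proposition~\ref{t1} have the form $0\to S/L_j\to S/L_{j-1}\to S/U_j\to 0$, and the cokernels $S/U_j$ are then themselves decomposed by further sequences involving the $A_{j,\ell},B_{j,\ell}$. There is no naive ``parallel sequence of ideals'' that makes this two-level scheme transfer: when you replace $S/L_{j-1}$ by $L_{j-1}$, the relevant sequence is $0\to x_{n-m+j}L_j\to L_{j-1}\to L_{j-1}/x_{n-m+j}L_j\to 0$, and the third term is a subquotient of ideals, not a quotient ring, so the $A_{j,\ell},B_{j,\ell}$ machinery does not apply to it. The paper handles this by proving a new identification (equation~\eqref{claimu}):
\[
L_{j-1}/x_{n-m+j}L_j\;\cong\;(x_{n-2m+j}\cdots x_{n-m+j-1})\,I_{n-m+j-1,m}^{\,t-1}[x_{n-m+j+1},\ldots,x_n],
\]
which recognizes each subquotient directly as a shifted copy of an $I_{n',m}^{t-1}$ and closes the induction in one step. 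This is not a cosmetic change; it is the new ingredient. Also note that your $t=1$ base case cannot be Theorem~\ref{mirpat}, which concerns $S/I_{n,m}$; the paper instead invokes \cite[Proposition~1.7]{lucrare} for $\sdepth(I_{n,m})\geq\depth(I_{n,m})$.

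For part (2), your two separate routes (iterating Lemma~\ref{inmt} down to $t=1$ for the first bound, and a bespoke structural estimate for the second) are more laborious than necessary, and the ``structural estimate'' you flag as the main obstacle is exactly what the paper avoids. The paper treats both bounds simultaneously: by Lemma~\ref{lem}(2) and Lemma~\ref{l2},
\[
\sdepth(I_{n,m}^t)\leq\sdepth\bigl(I_{n,m}^t:w(m,t,q)\bigr)=\sdepth\bigl(U_{m,t}+P_{m,t,q}\bigr),
\]
and since $U_{m,t}$ and $P_{m,t,q}$ live in disjoint sets of variables, \cite[Theorem~1.3]{mirci} gives
\[
\sdepth(U_{m,t}+P_{m,t,q})\leq\min\{\sdepth(U_{m,t}S),\;\sdepth(P_{m,t,q}S)\}.
\]
The first bound then comes from the fact that $P_{m,t,q}$ is generated by $2(q-1)$ variables with $q=\lfloor(n-t+1)/(m+1)\rfloor$, and the second from Lemma~\ref{l1}(3). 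So the obstacle you anticipate is already a known inequality in the literature and does not need a new combinatorial argument.
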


\begin{proof}
(1) From Theorem \ref{t2} we have that $\depth(I_{n,m}^t)=\varphi(n,m,t)+1$, where 
$$\varphi(n,m,t) = \begin{cases} 
n -t+2 - \left\lfloor \frac{n-t+2}{m+1} \right\rfloor - \left\lceil \frac{n-t+2}{m+1} \right\rceil, & t \leq n+1-m
 \\ m, & t > n+1-m \end{cases},$$
hence, in order to prove the first assertion of the theorem, we have to show
that: $$\sdepth(I_{n,m}^t)\geq \varphi(n,m,t)+1 \text{ or }\sdepth(I_{n,m}^t)\geq\depth(I_{n,m}^t).$$
We use induction on $n,m,t\geq 1$.

If $t=1$ then $\sdepth(I_{n,m})\geq \depth(I_{n,m})$ follows from \cite[Proposition 1.7]{lucrare}. 

Now, assume $t\geq 2$.
If $n=m$ then $I_{n,n}=(x_1\cdots x_n)$ and $I_{n,n}^t=(x_1^t\cdots x_n^t)$ is a principal ideal. Hence:
$$\sdepth(I_{n,n}^t)=n=\depth(I_{n,n}^t).$$
Also, if $m=1$ then $I_{n,1}=\mathbf m=(x_1,\ldots,x_n)$ and we obviously have that:
$$\sdepth(I_{n,1}^t)=\sdepth(\mathbf m^t)\geq 1 = \depth(\mathbf m^t).$$
Thus, we may assume that $n>m\geq 2$. 
If $n\leq 2m-1$, then 
\begin{align*}
& I_{n,m}=x_{n-m+1}\cdots x_{m} \widetilde I_{n,m},\text{ where } \\
& \widetilde I_{n,m}=(x_1x_2\cdots x_{n-m},x_2\cdots x_{n-m}x_{m+1},\ldots, x_{n-m}x_{m+1}\cdots x_{n-1},x_{m+1}\cdots x_n).
\end{align*}
It follows that $I_{n,m}^t=x_{n-m+1}^t \cdots x_{m}^t \widetilde I_{n,m}^t$. As in the proof of Proposition \ref{t1}, we
have that: $$I_{n,m}^t \cong I_{2(n-m),n-m}^t[x_{n-m+1},\ldots,x_n],$$
and, therefore, by induction hypothesis and Lemma \ref{lem}, it follows that:
$$\sdepth(I_{n,m}^t)\geq \varphi(2(n-m),n-m,t)+1+2m-n=\varphi(n,m,t)+1,$$
as required. In the following, we assume $n\geq 2m$.

We let $L_0:=I_{n,m}^t$ and $L_j:=(L_{j-1}:x_{n-m+j})$ for $1\leq j\leq m$. We have
the decompositions:
$$ I_{n,m}^t = L_0 = x_{n-m+1}L_1 \oplus L_0/x_{n-m+1}L_1 $$
$$ L_1 = x_{n-m+2}L_2 \oplus L_1/x_{n-m+2}L_2 $$
$$ \vdots $$
\begin{equation}\label{lemee}
L_{m-1}=x_{n}L_m \oplus L_{m-1}/x_nL_m.
\end{equation}
From Lemma \ref{inmt}, we have that $L_m=(I_{n,m}^t:x_{n-m+1}\cdots x_n)=I_{n,m}^{t-1}$ and hence, by
induction hypothesis and Lemma \ref{lem}, we obtain:
\begin{equation}\label{lemeee}
\sdepth(x_nL_m)=\sdepth(L_m)=\sdepth(I_{n,m}^{t-1})\geq \varphi(n,m,t-1)+1 \geq \varphi(n,m,t)+1.
\end{equation}
According to \eqref{lemeee} and the decompositions \eqref{lemee}, in order to prove that
$$\sdepth(I_{n,m}^t)\geq \varphi(n,m,t)+1,$$ it is enough to show that
\begin{equation}\label{evrika}
\sdepth(L_{j-1}/x_{n-m+j}L_j)\geq \varphi(n,m,t)+1,\text{ for all }1\leq j\leq m.
\end{equation}
Using the identity $v(I:v)=(v)\cap I$, where $v\in S$ is a monomial and $I\subset S$ is a monomial ideal, it follows that that:
$$ \frac{L_{j-1}}{x_{n-m+j}L_j} = \frac{(I_{n,m}^t:x_{n-m+1}\cdots x_{n-m+j-1})}{x_{n-m+j}(I_{n,m}^t:x_{n-m+1}\cdots x_{n-m+j})}
\cong $$ 
\begin{equation}\label{proclet}
 \cong \frac{x_{n-m+1}\cdots x_{n-m+j-1} (I_{n,m}^t:x_{n-m+1}\cdots x_{n-m+j-1})}{x_{n-m+1}\cdots x_{n-m+j}(I_{n,m}^t:x_{n-m+1}\cdots x_{n-m+j})}
\cong \frac{I_{n,m}^t\cap (x_{n-m+1}\cdots x_{n-m+j-1})}{I_{n,m}^t\cap (x_{n-m+1}\cdots x_{n-m+j})}.
\end{equation}
We claim that:
\begin{equation}\label{claimu}
\frac{I_{n,m}^t\cap (x_{n-m+1}\cdots x_{n-m+j-1})}{I_{n,m}^t\cap (x_{n-m+1}\cdots x_{n-m+j})} \cong 
(x_{n-2m+j}\cdots x_{n-m+j-1})I_{n-m+j-1,m}^{t-1}[x_{n-m+j+1},\ldots,x_n].
\end{equation}
Indeed, if $u\in G(I_{n,m}^t)$ such that $x_{n-m+1}\cdots x_{n-m+j-1}|u$ and $x_{n-m+1}\cdots x_{n-m+j}\nmid u$, then 
it is easy to note that $u \in G(I_{n-m+j-1,m}^t)$ and $x_{n-2m+j}\cdots x_{n-m+j-1}|u$. We write
$$u=(x_{n-2m+j}\cdots x_{n-m+j-1})w,\;\text{ where }w\in K[x_1,\ldots,x_{n-m+j-1}].$$
From Lemma \ref{inmt}, it follows that:
$$w\in (I_{n-m+j-1,m}^t:x_{n-2m+j}\cdots x_{n-m+j-1}) = I_{n-m+j-1,m}^{t-1}.$$
In order to complete the proof of the claim \eqref{claimu}, it is enough to notice that 
$$u\cdot u' \in (I_{n,m}^t \cap (x_{n-m+1}\cdots x_{n-m+j-1})) \setminus (I_{n,m}^t \cap (x_{n-m+1}\cdots x_{n-m+j})),$$
if and only if $u'$ is a monomial with $x_{n-m+j}\notin \supp(u')$.

Now, from \eqref{proclet}, \eqref{claimu}, Lemma \ref{lem} and the induction hypothesis, it follows that:
$$\sdepth(L_{j-1}/x_{n-m+j}L_j) = \sdepth(I_{n-m+j-1,m}^{t-1})+m-j \geq \varphi(n-m+j-1,m,t-1) + 1+m-j \geq $$
$$\geq n-m+j-1-(t-1)+2 - \left\lfloor \frac{n-m+j-t+2}{m+1} \right\rfloor - \left\lceil \frac{n-m+j-t+2}{m+1} \right\rceil + 1+ m-j =$$
$$
 = n-t+2 - \left\lfloor \frac{n-m+j-t+2}{m+1} \right\rfloor - \left\lceil \frac{n-m+j-t+2}{m+1} \right\rceil + 1 \geq
\varphi(n,m,t)+1,$$
hence, \eqref{evrika} holds. Thus, the first part of the proof is complete.

(2) According to Lemma \ref{lem} and \cite[Theorem 1.3]{mirci}, we have that:
   $$\sdepth(I_{n,m}^t)\leq \sdepth(I_{n,m}^t:w(m,t,q))\leq \min\{ \sdepth(U_{m,t}S),\sdepth(P_{m,t,q}S)\},$$
where $q=\lfloor \frac{n-t+1}{m+1} \rfloor$. The conclusion follows from the fact that $P_{n,t,q}$ has $2(q-1)$
generators, if $q\geq 2$, and Lemma \ref{l1}(3).

\end{proof}

\subsection*{Aknowledgments} 

We gratefully acknowledge the use of the computer algebra system Cocoa (\cite{cocoa}) for our experiments.

The second author was supported by a grant of the Ministry of Research, Innovation and Digitization, CNCS - UEFISCDI, 
project number PN-III-P1-1.1-TE-2021-1633, within PNCDI III.

\end{document}